\newtheorem{theorem}{Theorem}[section]
\newtheorem{lemma}[theorem]{Lemma}
\newtheorem{proposition}[theorem]{Proposition}
\theoremstyle{plain}
\theoremstyle{definition}
\theoremstyle{definition} 
\newtheorem*{definition*}{Definition}
\theoremstyle{example} 
\newtheorem*{example*}{Example}
\theoremstyle{theorem}
\theoremstyle{theorem} 
\newtheorem*{conjecture*}{Conjecture}
\theoremstyle{theorem}
\theoremstyle{theorem} 
\newtheorem{assumption}[theorem]{Assumption}
\theoremstyle{theorem} 
\newtheorem*{assumption*}{Assumption}
\theoremstyle{remark}
\theoremstyle{remark} 
\newtheorem*{remark*}{Remark}
\newcommand{\NN}{\mathbb{N}}
\newcommand{\RR}{\mathbb{R}}
\newcommand{\CC}{\mathbb{C}}
\newcommand{\ZZ}{\mathbb{Z}}
\newcommand{\LLL}{\mathcal{L}}
\newcommand{\D}{\mathcal{D}}
\newcommand{\E}{\mathcal{E}}
\newcommand{\U}{\mathcal{U}}
\newcommand{\Z}{\mathcal{Z}}
\newcommand{\W}{\mathcal{W}}
\newcommand{\di}{\mathrm{d}}
\newcommand{\bk}{\boldsymbol{k}}
\newcommand{\bell}{\boldsymbol{\ell}}
\newcommand{\bm}{\boldsymbol{m}}
\newcommand{\bnu}{\boldsymbol{\nu}}
\newcommand{\bZ}{\boldsymbol{\mathcal{Z}}}
\newcommand{\Sl}{\mathfrak{sl}}
\newcommand{\SL}{\mathrm{SL}}
\newcommand{\SO}{\mathrm{SO}}
\newcommand{\PSL}{\mathrm{PSL}}
\newcommand{\Cinf}{C^{\infty}}
\newcommand{\Lii}{L^{2}}
\newcommand{\HH}{\mathcal{H}}
\newcommand{\II}{\mathcal{I}}
\newcommand{\cee}{c}
\providecommand{\abs}[1]{\lvert#1\rvert}
\providecommand{\Abs}[1]{\left|#1\right|}
\providecommand{\norm}[1]{\lVert#1\rVert}
\providecommand{\Norm}[1]{\left\|#1\right\|}
\def\d{{\delta}}
\def\e{{\epsilon}}
\def\s{{\sigma}}
\def\vars{{\varsigma}}
\def\t{{\tau}}
\def\l{{\lambda}}
\def\L{{\Lambda}}
\def\o{{\omega}}
\def\O{{\Omega}}
\def\G{{\Gamma}}
\title{\textbf{Higher cohomology of parabolic actions on certain homogeneous spaces}}
\author{Felipe A.~Ram{\'i}rez\footnote{\textbf{email:} \texttt{felipe.ramirez@york.ac.uk}} \\ University of York}
\date{}
\begin{document}

\maketitle
\thispagestyle{empty}


\begin{abstract}
We show that for a parabolic $\RR^d$-action on $\PSL(2,\RR)^d/\G$, the cohomologies in degrees $1$ through $d-1$ trivialize, and we give the obstructions to solving the degree-$d$ coboundary equation, along with bounds on Sobolev norms of primitives. In previous papers we have established these results for certain Anosov systems. The present work extends the methods of those papers to systems that are \emph{not} Anosov. The main new idea is in \S\ref{sec:topdegree}, where we define special elements of representation spaces that allow us to modify the arguments from the previous papers. In \S\ref{sec:discussion} we discuss how one may generalize this strategy to $\RR^d$-systems coming from a product of Lie groups, like in the systems we have here.
\end{abstract}


\newpage
{\footnotesize\tableofcontents}

\section{Introduction}

This article is a complement to~\cite{Ramhc}, where we studied the smooth cohomology of Anosov $\RR^d$-actions on homogeneous spaces of the $d$-fold product $\SL(2,\RR)^d=\SL(2,\RR)\times\dots\times\SL(2,\RR)$. The main results there are a description of top-degree (degree $d$) cohomology and a vanishing statement for the lower degrees (degrees $1,\dots,d-1$). Here we consider \emph{unipotent} $\RR^d$-actions on irreducible compact quotients of $\PSL(2,\RR)^d$, and prove similar statements to those we proved in the Anosov case. Unlike in the Anosov case, these results are not expected to hold for all unipotent systems. In fact, they are in a sense optimal for our examples. We comment on this after stating Theorem~\ref{thm:unipotent}. 

\subsection*{Cohomology in dynamics}
Cohomology is a fundamental tool in the study of rigidity properties of dynamical systems. For some background on this, we recommend~\cite{Fur81, KR, KN11}. Let us briefly review some definitions that are relevant to this paper. (The initiated reader may skip to the ``Past work'' section.)

For a flow on a manifold $M$ along a vector field $V$, the \textbf{(degree-$1$) coboundary equation} is the familiar $V\,g=f$, asking us to determine whether a given smooth function (or \textbf{$1$-cocycle}) $f\in\Cinf(M)$ is the derivative of some other smooth function $g\in\Cinf(M)$ in the flow direction. There is a host of literature, even on this seemingly modest question. For now we only mention the famous Livshitz Theorem, which gives a beautiful answer in the case of an Anosov flow on a compact manifold. In words, the theorem states that we only need to check that $f$ has integral $0$ around every periodic orbit of the flow~\cite{Livsic, GK80, GK80b, dlLMM}. (Though we refer to it as the Livshitz Theorem, the full statement, with regularity of solutions and all, is really the culmination of work by several authors.)

For an $\RR^d$-action generated by commuting vector fields $V_1,\dots,V_d$ on a manifold $M$, one can define cohomology in degrees $1,\dots,d$. The \textbf{degree-$d$ coboundary equation} is
\begin{equation}\label{eqn:dcobeq}
	V_1\,g_1+\dots+V_d\,g_d = f.
\end{equation}
This corresponds to asking whether a closed orbit-leafwise differential $d$-form (or \textbf{$d$-cocycle}, the one determined by $f\in\Cinf(M)$) is a coboundary, in the sense of having a \textbf{$(d-1)$-primitive} defined by $d$ smooth functions $g_1,\dots,g_d \in \Cinf(M)$.  Accordingly, there are coboundary equations in all degrees, corresponding to the usual \textbf{coboundary equation} $\di\eta=\o$ for leafwise differential forms, asking us to find an $(n-1)$-form $\eta \in \O^{n-1}$ whose leafwise exterior derivative is the given leafwise $n$-cocycle $\o \in \O^n$. The leafwise exterior derivative $\di$ is defined by the formula
\[
\di\o(V_{i_1}, \dots, V_{i_{n+1}}) = \sum_{j=1}^{n+1}(-1)^{j+1}V_{i_j}\,\o(V_{i_1},\dots, \widehat{V_{i_j}}, \dots, V_{i_{n+1}}),
\]
where $1\leq i_1<\dots<i_{n+1}\leq d$ and $\widehat{V_{i_j}}$ means that $V_{i_j}$ is omitted. So in degrees $1,\dots, d-1$, the coboundary equation is a system of $\binom{d}{n}$ partial differential equations, instead of just one ($1=\binom{d}{d}$) differential equation as in~\eqref{eqn:dcobeq}.

\subsection*{Past work}
There is a conjectural generalization of the Livshitz Theorem to higher-rank Anosov actions, due to A.~and S.~Katok~\cite{KK95}. They predict that in order for the degree-$d$ coboundary equation to have a smooth solution, one should only have to check that $f$ integrates to $0$ over closed orbits of the Anosov $\RR^d$-action. Furthermore, when $d \geq 2$, all lower cohomologies should trivialize. (It is already known by work of A.~Katok and R.~Spatzier that the \emph{first} cohomology does~\cite{KS94first}.) Katok and Katok proved their conjecture for partially hyperbolic $\ZZ^d$-actions on tori~\cite{KK95,KK05}. 

This problem was the motivation for our previous papers on higher cohomology~\cite{Ramhc,Ramhcii}. There, we studied certain families of \emph{Weyl chamber flows}, and proved that indeed the lower cohomologies trivialize for those systems, and that in degree $d$---the \emph{top} degree---one only has to deal with obstructions coming from action-invariant distributions. Integration over a closed orbit is itself an invariant distribution, so our results in top degree are \emph{a priori} weaker than the expected statements, but they are a step in the right direction. 

\subsection*{The present work}
In this article we work with \emph{unipotent} systems, where there may not even be any closed orbits. Yet, we know from work of L.~Flaminio and G.~Forni~\cite{FF} that for the horocycle flow of a hyperbolic surface, there are always obstructions to the degree-$1$ coboundary equation, coming from flow-invariant distributions. Therefore, a statement like the Katok--Katok Conjecture cannot possibly be true in the unipotent case, at least not the top-degree part.

On the other hand, it is natural to expect that the obstructions to the top-degree coboundary equation for a unipotent $\RR^d$-action come from invariant distributions, as in the work of Flaminio--Forni. Indeed, we prove the following result for the cohomology of unipotent $\RR^d$-actions on homogeneous spaces of the $d$-fold product $\PSL(2,\RR)^d$.

\begin{theorem}\label{thm:unipotent}
Let  $G= G_1\times\dots\times G_d$ with $G_i \cong \PSL(2,\RR)$ for $i=1,\dots, d$,  and let $\G \subset G$ be an irreducible cocompact lattice. Consider the maximal unipotent $\RR^d$-action on $G/\G$. For any smooth member of the kernel of all $\RR^d$-invariant distributions there is a smooth solution to the degree-$d$ coboundary equation. Also, the cohomologies in degrees $1,\dots, d-1$ trivialize, meaning that any smooth cocycle is smoothly cohomologous to one defined by constant functions on $G/\G$.
\end{theorem}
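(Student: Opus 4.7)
The plan is to decompose the regular representation on $L^2(G/\G)$ into irreducible unitary representations of $G$ and solve the coboundary equation component-by-component with precise Sobolev estimates, in the spirit of \cite{Ramhc,Ramhcii} and of Flaminio--Forni's work on the horocycle flow. Since $G = G_1 \times \dots \times G_d$ is a product, every irreducible unitary representation of $G$ has the external tensor product form $\pi = \pi_1 \,\widehat{\otimes}\, \dots \,\widehat{\otimes}\, \pi_d$ with each $\pi_i$ an irreducible unitary representation of $\PSL(2,\RR)$; and because $\G$ is irreducible, the only irreducible summand of $L^2(G/\G)$ in which some factor $\pi_i$ is trivial is the one-dimensional space of constants. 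Consequently, on the orthogonal complement of the constants, every $V_i$ is spectrally nontrivial in every irreducible piece---this is the key input that lets one run Flaminio--Forni-style inversion in each factor separately.

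For top degree, let $f$ be a smooth function annihilated by every $\RR^d$-invariant distribution. I would decompose $f = \sum_\pi f_\pi$ and, in each nontrivial $\pi$, apply the Flaminio--Forni analysis of the horocycle generator on a single factor: for any smooth vector in $\pi_i$ lying in the kernel of the two $V_i$-invariant distributions, there is a smooth primitive with explicit Sobolev bounds in the Casimir eigenvalue. The subtlety here, absent in the Anosov case of \cite{Ramhc}, is that the $V_i$-invariant distributions on $\pi$ (tensored with the identity on the other factors) are generally \emph{not} $\RR^d$-invariant---only their joint annihilators are---so one cannot simply fix an index $i$ and invert $V_i$ on all of $f_\pi$. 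This is where I expect the \emph{special elements} of \S\ref{sec:topdegree} to enter: distinguished smooth vectors in $\pi$ permitting a splitting $f_\pi = \sum_i V_i h_{i,\pi}$ in which each $h_{i,\pi}$ satisfies a Sobolev bound uniform enough in $\pi$ to be summed across the spectral decomposition, yielding smooth global $g_i$'s that solve~\eqref{eqn:dcobeq}.

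For the lower degrees $1,\dots, d-1$, given a smooth cocycle $\o$, I would isolate its constant part (which is itself a cocycle and represents the trivial class allowed by the theorem) and reduce to showing that the mean-zero remainder $\o_0$ is a coboundary. Decomposing $\o_0$ into irreducibles as above and working in a fixed nontrivial $\pi$, I would invert the $V_i$'s one direction at a time: apply the Flaminio--Forni primitive along $V_i$ to the components of $\o_0$ involving the direction $i$ (no obstruction survives, because $\o_0$ has zero spatial mean and the cocycle conditions propagate the required annihilation), and then use the cocycle relations to verify that the difference $\o_0 - \di\eta$ is a cocycle in the remaining $d-1$ directions. Iterating this reduction---peeling off one direction at a time and absorbing the remainder into a primitive---drives the non-constant part of $\o$ to zero.

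The main obstacle is the Sobolev bookkeeping. The Flaminio--Forni estimates on primitives in a single irreducible deteriorate with the Casimir eigenvalue, and for the top-degree case one must choose the splitting $f_\pi = \sum_i V_i h_{i,\pi}$ so that the resulting bounds are summable across the spectral decomposition of $L^2_0(G/\G)$. The author's innovation over \cite{Ramhc}---the special elements of \S\ref{sec:topdegree}---is precisely a device for making such a compatible choice in the non-Anosov setting; verifying their existence and the requisite uniform bounds is, I expect, the technical heart of the proof. The lower-degree estimates should then follow by descent on the number of directions, the compatibility being guaranteed by the cocycle conditions.
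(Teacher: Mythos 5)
Your proposal matches the paper's strategy: decompose $L^2(G/\G)$ into irreducible tensor factors of $\PSL(2,\RR)^d$-representations, use Flaminio--Forni's horocycle theorem as the degree-$1$ base case, introduce special elements (the paper's $\varphi_\pm$ of \S\ref{sec:topdegree}) to split $f$ into pieces whose restrictions lie in the relevant kernels of invariant distributions, and propagate Sobolev estimates uniformly across the spectral decomposition---all exactly as the paper does via Lemmas~\ref{lem:squaresumphi}--\ref{lem:kernels} and Theorems~\ref{thm:topiurs} and~\ref{thm:loweriurs}. The only slight discrepancies are presentational: the paper realizes the top-degree splitting as a recursive binary decomposition $f=f_\otimes+f_d$ (equation~\eqref{eqn:fotimes}) driving an induction on $d$ rather than a one-shot $d$-way decomposition $f_\pi=\sum_i V_i h_{i,\pi}$, and the discussion in \S\ref{sec:discussion} indicates the $\varphi$'s are not a device newly required by the non-Anosov setting but were implicitly present already in~\cite{Ramhc}, so the ``subtlety absent in the Anosov case'' framing is a touch off.
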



\begin{remark*}
We in fact prove versions of Theorem~\ref{thm:unipotent} that also give bounds on Sobolev norms of solutions to coboundary equations, listed here as Theorems~\ref{thm:sobtop} and~\ref{thm:soblower}.
\end{remark*}

\subsection*{Expectations}
Notice that since there are no closed orbits in the systems of Theorem~\ref{thm:unipotent}, a statement of this type is the best one can hope for in terms of describing the obstructions to solving the top-degree coboundary equation. On the other hand, not even the \emph{lower}-degree part of Theorem~\ref{thm:unipotent} holds for every unipotent system: Already, there are non-vanishing obstructions (to trivialization of first cohomology) for unipotent $\RR^2$-actions on homogeneous spaces of $\SL(2,\CC)$ found by Mieczkowski~\cite{M1}; Wang~\cite{Wan12} has shown the same for unipotent $\RR^2$-actions on homogeneous spaces of $\SL(3,\RR)$; and, in an ongoing project, L.~Flaminio and the author show that there are non-vanishing obstructions to cohomology of the \emph{horospheric} action in infinitely many irreducible unitary representations of $\SO^\circ(1,N)$, for any $N\geq 2$. 

\subsection*{Methods}
The arguments used here are adaptations of the representation-theoretic methods used in~\cite{Ramhc,Ramhcii}. The main new idea here is in defining ``$\varphi$'s'' (see \S\ref{sec:topdegree}), or, more precisely, articulating them. These are special elements of irreducible unitary representations of $\PSL(2,\RR)$ that facilitate an inductive argument in the proof of the main theorem. After reinterpretation, one can say that similar special elements were also present in the previous papers, but were ``hidden,'' so went without notice. By drawing attention to them we are now able to modify the procedures from our previous articles (which were for Anosov systems) so that they work for the unipotent systems in this article. Our \S\ref{sec:discussion} is a discussion of this, and how it may generalize.

\paragraph{}
By combining Theorem~\ref{thm:unipotent} with the results in~\cite{Ramhc}, we realize the following result for ``mixed'' actions.
\begin{theorem}\label{thm:mixed}
Let  $G= G_1\times\dots\times G_d$ with each $G_i \cong \PSL(2,\RR)$ and let $\G \subset G$ be an irreducible cocompact lattice. Consider an $\RR^d$-action on $G/\G$ whose projection to each factor corresponds to either the geodesic or horocycle flow. Then for any smooth member of the kernel of all $\RR^d$-invariant distributions there is a smooth solution to the degree-$d$ coboundary equation. Also, the cohomologies in degrees $1,\dots, d-1$ trivialize.
\end{theorem}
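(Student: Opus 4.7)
The plan is to work representation-theoretically, exploiting the product structure of $G$. Since $G$ is a direct product of $\PSL(2,\RR)$ factors, every irreducible unitary representation $\pi$ appearing in $L^2(G/\Gamma)$ decomposes as an external tensor product $\pi = \pi_1 \otimes \dots \otimes \pi_d$ of irreducibles of the factors, and each generator $V_i$ of the $\RR^d$-action acts in exactly one tensor slot---as the derived horocycle operator if the projection to $G_i$ is horocyclic, and as the derived geodesic operator if it is geodesic. On a horocycle slot we have available the $\varphi$-construction of \S\ref{sec:topdegree} and the conclusions of Theorem~\ref{thm:unipotent}; on a geodesic slot we have the hyperbolic primitive construction of~\cite{Ramhc}. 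Both are formulated at the level of individual irreducibles of $\PSL(2,\RR)$ and produce primitives with Sobolev bounds controlled by the Casimir eigenvalue of the corresponding factor.

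For the top-degree statement, I would solve the coboundary equation $V_1 g_1 + \dots + V_d g_d = f$ by an iterative, coordinate-by-coordinate primitive construction. At step $i$, apply the one-dimensional primitive construction in the $V_i$-direction---horocyclic or geodesic as appropriate---to the currently reduced cocycle, absorb the resulting $V_i g_i$ into the known piece, and pass to the residual cocycle in the remaining $d-i$ directions. The hypothesis that $f$ lies in the kernel of every $\RR^d$-invariant distribution is exactly what is needed to launch each step of this iteration: the obstructing invariant distributions in each one-dimensional subproblem are either geodesic invariants (handled in~\cite{Ramhc}) or Flaminio--Forni-type horocycle invariants (handled in Theorem~\ref{thm:unipotent}), and in both cases they lift to $\RR^d$-invariant distributions on $G/\Gamma$. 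Because the $V_i$'s act in disjoint tensor slots and hence commute, the Sobolev bounds contributed by the two toolkits multiply across the factors, yielding mixed analogues of Theorems~\ref{thm:sobtop} and~\ref{thm:soblower}. Summation over the representation decomposition then gives a smooth primitive whenever $f$ is smooth of sufficient Sobolev order.

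The lower-degree vanishing follows by the same iterative philosophy applied to $n$-cocycles for $1 \leq n \leq d-1$, using the trivialization statements for intermediate cohomology in both~\cite{Ramhc} and Theorem~\ref{thm:unipotent}. The main obstacle here is the combinatorial bookkeeping: after constructing a primitive in one direction one must verify that the reduced cocycle in the remaining directions is still closed and still annihilates the relevant lower-dimensional invariant distributions, so that the induction can continue. Because the horocycle and geodesic constructions act on disjoint tensor slots, they commute and introduce no cross-interference, so the bookkeeping reduces to an adaptation of the pure horocycle case from \S\ref{sec:topdegree} and the pure geodesic case from~\cite{Ramhc}, with the only genuine task being to verify that the two sets of Sobolev estimates combine compatibly---which they do, essentially because they control independent factors of the Casimir spectrum.
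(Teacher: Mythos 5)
Your sketch captures the right framework---external tensor decomposition, the two toolkits (the $\varphi$-construction of \S\ref{sec:topdegree} for horocyclic slots, the Anosov machinery of~\cite{Ramhc} for geodesic slots), and an induction that peels off one factor at a time. That is indeed the approach the paper indicates (the paper's own proof of Theorem~\ref{thm:mixed} is the single sentence that one ``mixes the arguments'' of this note with those of~\cite{Ramhc}). But there is a genuine gap at exactly the point you treat as automatic.

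You write that the hypothesis that $f$ lies in the kernel of every $\RR^d$-invariant distribution is ``exactly what is needed to launch each step of this iteration,'' because the one-dimensional obstructions ``lift to $\RR^d$-invariant distributions.'' This is not enough. If $\D$ is a $U_d$-invariant distribution in the last slot, then $\D \otimes (\text{constant functional on the first }d-1\text{ slots})$ is \emph{not} an $\RR^d$-invariant distribution; you only know $f$ is annihilated by tensor products in which \emph{every} slot carries an invariant distribution. Consequently $(f\mid_{\bk})$, the projection to the last slot at a fixed basis index $\bk$, has no reason to lie in $\ker\II_{U_d}(\HH_d)$, and a na\"ive ``solve $V_d\,g_d = f$ and pass the residual to the other directions'' approach fails at the very first step. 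What is actually needed is the decomposition $f = f_\otimes + f_d$ of~\eqref{eqn:fotimes}, built from the $\varphi_\pm$ (or the appropriate $\varphi_0,\varphi_1$ in a geodesic slot), together with Lemma~\ref{lem:kernels}: it is only after this engineered splitting that each summand lands, slotwise, in the required kernel---$(f_\otimes\mid_\ell) \in \ker\II_{U_1,\dots,U_{d-1}}$ and $(f_d\mid_{\bk}) \in \ker\II_{U_d}$---so the induction and the base case can be invoked. Your proposal mentions the $\varphi$-construction as something ``available on a horocycle slot,'' but does not use it for what it is actually for, namely constructing the splitting; you should choose, for each factor being peeled off, the $\varphi$'s dual to that factor's slate of invariant distributions (Flaminio--Forni's $\D^\pm$ in the horocyclic case, Mieczkowski's $\D^0,\D^1$ in the geodesic case) and prove the analogue of Lemma~\ref{lem:kernels} for that mixed decomposition. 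Without that step, ``absorb $V_i g_i$ and pass to the residual'' has no justification that the residual satisfies the induction hypothesis.
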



\section{Results}

Our main results are stated for unitary representations of $\PSL(2,\RR)^d$ that satisfy the following assumption, which in particular holds for the regular representation of $\PSL(2,\RR)^d$ on $\Lii(\PSL(2,\RR)^d/\G)$ where $\G\subset\PSL(2,\RR)^d$ is an irreducible cocompact lattice.

\begin{assumption}\label{ass:ume}
For the unitary representation $\PSL(2,\RR)^d\to\U(\HH)$ there exists $\e_0 >0$ and direct integral decomposition
\[
	\HH = \int_\RR^\oplus \HH_\l\,ds(\l)
\]
such that for $ds$-almost every $\l$,
\[
	\HH_\l = \HH_{\nu_1(\l)}\otimes\dots\otimes\HH_{\nu_d(\l)}
\]
where $\nu_j (\l) \notin B(0,\e_0)\backslash\{0\}$ for all $j=1,\dots,d$. Colloquially, ``none of the $\nu_j$'s accumulate to zero.''
\end{assumption}

The following two theorems are the main results of this paper. They constitute a version of Theorem~\ref{thm:unipotent} for unitary representations of $\PSL(2,\RR)^d$ that satisfy Assumption~\ref{ass:ume} and have spectral gap. The statements include estimates on Sobolev norms.

\begin{theorem}[Version of Theorem~\ref{thm:unipotent} for unitary representations with spectral gap; top degree]\label{thm:sobtop}
Let $\HH$ be the Hilbert space of a unitary representation of $\PSL(2,\RR)^d$ with a spectral gap and satisfying Assumption~\ref{ass:ume}. Suppose $f \in \Cinf(\HH)$ lies in the kernel of all $U_1,\dots,U_d$-invariant distributions. Then there exists a solution $g_1,\dots,g_d \in \Cinf(\HH)$ to the degree-$d$ coboundary equation for $f$ that satisfies the Sobolev estimates $\norm{g_i}_t \ll_{\nu_0, t} \norm{f}_{\s_d}$, where $\s_d := \s_d (t)$ is some increasing function of $t>0$.

\end{theorem}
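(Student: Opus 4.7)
The plan is to reduce everything to an irreducible-component analysis in the direct integral of Assumption~\ref{ass:ume}, following the representation-theoretic strategy from \cite{Ramhc, Ramhcii} but with the modifications needed to replace the Anosov generators there with the unipotent generators $U_1,\ldots,U_d$ here. Writing $f = \int_\RR^\oplus f_\l\, ds(\l)$ with $f_\l \in \HH_{\nu_1(\l)}\otimes\dots\otimes\HH_{\nu_d(\l)}$, I would attempt to produce primitives $g_{i,\l}$ in each fiber with Sobolev bounds that depend only on $t$ and on the distance of the $\nu_j(\l)$'s from $0$. The role of Assumption~\ref{ass:ume} is precisely to bound this distance below by $\e_0$, so the fiberwise estimates can be integrated back up to global Sobolev estimates for $g_1,\ldots,g_d$ without the usual Flaminio--Forni small-eigenvalue blow-up.

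The fiberwise work is inductive on $d$. The base case $d=1$ is the Flaminio--Forni theorem~\cite{FF}, which solves $U g = f$ in an irreducible unitary $\PSL(2,\RR)$-representation whenever $f$ is annihilated by every $U$-invariant distribution, with explicit loss of derivatives controlled by the Casimir parameter. For the inductive step I would use the special elements $\varphi$ promised in \S\ref{sec:topdegree}: in each factor $\HH_{\nu_j(\l)}$ choose a distinguished element $\varphi_j$ with prescribed behaviour under $U_j$, and decompose any $v \in \HH_{\nu_1(\l)}\otimes\cdots\otimes\HH_{\nu_d(\l)}$ along the tensor decomposition $\HH_{\nu_j} = \CC\varphi_j \oplus \varphi_j^\perp$ in a carefully chosen order. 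Solving $U_1 g_{1,\l} = f_\l - (\text{correction})$ by the base case in the first factor reduces the problem to finding primitives for the ``correction''---an object living in the shorter tensor product $\HH_{\nu_2}\otimes\cdots\otimes\HH_{\nu_d}$---to which the inductive hypothesis applies.

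The main obstacle is arranging that the correction really does satisfy the hypothesis of the inductive step, namely that it is annihilated by all $U_2,\ldots,U_d$-invariant distributions on the $(d-1)$-fold tensor product. This is exactly the work the $\varphi$'s are designed to do: they must be articulated so that projecting onto $\varphi_1^\perp$ (to get something in the image of $U_1$) preserves the kernel-of-invariant-distributions condition in the remaining variables, and so that the resulting primitive in the first factor glues with the inductive primitives into a genuine solution $g_1,\ldots,g_d$ of the full coboundary equation rather than only a formal one. The hypothesis that $f$ annihilates every $\RR^d$-invariant distribution is what initiates the induction, and the articulation of the $\varphi$'s must propagate this condition one factor at a time.

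Finally, I would track Sobolev norms throughout the induction. Each application of the base-case solver incurs a loss of a fixed number of derivatives depending on $t$ and on $\e_0$, and each passage through one of the $\varphi$-projections incurs another fixed loss. After $d$ steps the total loss is $\s_d(t)$ for some increasing function of $t$; because the $\nu_j(\l)$'s stay away from $0$ uniformly, all intermediate constants are uniform in $\l$, and one obtains $\norm{g_i}_t \ll_{\nu_0,t} \norm{f}_{\s_d}$ after integrating back over $\l$. The bookkeeping of this Sobolev loss is the other place where the new ideas of \S\ref{sec:topdegree} are needed, since the unipotent single-variable solver of \cite{FF} loses more derivatives than the hyperbolic solver used in the Anosov case of \cite{Ramhc}.
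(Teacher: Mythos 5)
Your proposal is essentially the paper's argument: reduce to irreducible components via the direct integral (as in the proof of Theorem~\ref{thm:sobtop}), then in each fiber induct on $d$ with Flaminio--Forni as the base case and use the $\varphi$-based splitting $f = f_\otimes + f_d$ of \S\ref{sec:topdegree} to propagate the kernel-of-invariant-distributions condition (Lemma~\ref{lem:kernels} feeding Theorem~\ref{thm:topiurs}). The one imprecision worth flagging is that both pieces $f_\otimes$ and $f_d$ live in the \emph{full} tensor product $\HH_1\otimes\cdots\otimes\HH_d$; it is their fiber restrictions $(f_\otimes\mid_\ell)$ and $(f_d\mid_\bk)$ that descend to the shorter product and the single factor, and it is at this fiberwise level that the inductive hypothesis and the base case are invoked, with Lemmas~\ref{lem:squaresumphi} and~\ref{lem:regularity} controlling the Sobolev loss when the fiber estimates are reassembled.
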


\begin{theorem}[Version of Theorem~\ref{thm:unipotent} for unitary representations with spectral gap; lower degrees]\label{thm:soblower}
Let $\HH$ be the Hilbert space of a unitary representation of $\PSL(2,\RR)^d$ with spectral gap and satisfying Assumption~\ref{ass:ume}, and suppose that almost every irreducible representation appearing in its direct decomposition has no trivial factor. Then for $1\leq n\leq d-1$, any smooth $n$-cocycle $\o\in\O_{\RR^d}^n(\Cinf(\HH))$ is the coboundary $\di\eta=\o$ of some $\eta\in\O_{\RR^d}^{n-1}(\Cinf(\HH))$ and $\norm{\eta}_t \ll_{\nu_0,t}\,\norm{\o}_{\vars_d}$ where $\vars_d:=\vars_d(t)$ is an increasing function of $t>0$.
\end{theorem}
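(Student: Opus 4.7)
The plan is to adapt the representation-theoretic strategy from the author's earlier paper~\cite{Ramhc}, where analogous lower-degree cohomology vanishing was established in the Anosov setting. By Assumption~\ref{ass:ume} and the spectral gap hypothesis, $\HH$ decomposes as a direct integral of irreducible components $\HH_{\nu_1}\otimes\cdots\otimes\HH_{\nu_d}$ with every $|\nu_j|$ bounded away from $0$. I would reduce the theorem to constructing, inside a single such tensor product, a primitive $\eta$ with $\nu$-uniform Sobolev estimates, and then reassemble via the direct integral.

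Within one tensor product with no trivial factor, the conceptual reason for vanishing is that the $d$-variable Koszul cochain complex factors as the tensor product of the $d$ single-variable complexes $\HH_{\nu_j}\xrightarrow{U_j}\HH_{\nu_j}$, and that in each single-variable complex the degree-$0$ cohomology vanishes on smooth vectors: because $\HH_{\nu_j}$ is a non-trivial irreducible unitary representation of $\PSL(2,\RR)$, the skew-adjoint generator $U_j$ has no kernel on smooth vectors. A Künneth-style argument then forces $H^n=0$ for $n\leq d-1$, since every tensor summand in the decomposition of $H^n$ must include some factor $H^0(U_j,\HH_{\nu_j})=0$.

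The explicit bounded construction is a Koszul-style contraction. Pick a distinguished direction, say $j_0=1$. For each $n$-subset $I\ni 1$, define $\eta_{I\setminus\{1\}}$ by applying the Flaminio--Forni Green's operator $U_1^{-1}$ in the first tensor factor to $\o_I$; the obstruction to this, namely the pairing of $\o_I$ against $U_1$-invariant distributions in $\HH_{\nu_1}$, is shown to vanish by iterated use of the cocycle identity $\di\o=0$ together with the absence of smooth $U_j$-invariant vectors in the remaining factors $\HH_{\nu_2},\ldots,\HH_{\nu_d}$. For $I\not\ni 1$, the component $\eta_I$ is forced by the cocycle identity applied to $I\cup\{1\}$ and the already-constructed $\eta$-pieces, and this same relation verifies $\di\eta=\o$.

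The principal obstacle is the Sobolev bookkeeping. Each application of a Green's operator $U_j^{-1}$ incurs a polynomial loss of Sobolev regularity whose constants depend on $|\nu_j|^{-1}$; the spectral gap hypothesis keeps these constants uniformly bounded by a function of $\nu_0$. Iterating through the $d$ directions produces the function $\vars_d(t)$ in the statement, and the $\nu$-uniformity lets the estimate survive direct-integral reassembly, giving $\norm{\eta}_t\ll_{\nu_0,t}\norm{\o}_{\vars_d}$.
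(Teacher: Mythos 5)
Your reduction to irreducible components via the direct integral decomposition matches the paper's, and the K\"unneth picture---that the Koszul complex of the $\RR^d$-action on $\HH_1\otimes\cdots\otimes\HH_d$ is a tensor product of single-variable complexes, so every summand in degree $n<d$ contains a factor $H^0(U_j,\HH_{\nu_j})=0$---is a reasonable heuristic for why the cohomology should vanish.

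However, your explicit construction has a gap. You claim that for every $n$-subset $I\ni 1$ the obstruction $(\D^\pm\otimes\mathrm{id}\otimes\cdots\otimes\mathrm{id})(\o_I)$ to inverting $U_1$ vanishes, deduced by ``iterated use of the cocycle identity.'' This is false. Take $d=3$, $n=2$, and any smooth $h\in\HH_1\otimes\HH_2\otimes\HH_3$ with $(\D^+\otimes\mathrm{id}\otimes\mathrm{id})(h)\neq 0$ (a basis vector works, since $\D^+(u(k))=1$ for all $k$). Set $\o_{12}:=-U_2 h$, $\o_{13}:=-U_3 h$, $\o_{23}:=0$. Then $\di\o=U_1\o_{23}-U_2\o_{13}+U_3\o_{12}=U_2U_3h-U_3U_2h=0$, so $\o$ is a closed $2$-form; it is in fact exact, with $\o=\di\eta$ for $\eta=(h,0,0)$. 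But
\[
(\D^+\otimes\mathrm{id}\otimes\mathrm{id})(\o_{12})=-U_2\,(\D^+\otimes\mathrm{id}\otimes\mathrm{id})(h)\neq 0,
\]
because $U_2$ has no kernel on smooth vectors of the nontrivial irreducible $\HH_2$. So the Green's operator $U_1^{-1}$ cannot be applied to $\o_{12}$, and your single-direction Koszul contraction---which would force $\eta_J=0$ whenever $1\in J$, here $\eta_1=0$---cannot reach the actual primitive $\eta=(h,0,0)$. The abstract K\"unneth vanishing does not translate into the naive contraction formula, because the single-variable complexes are not split: $H^0=0$ but $U_j$ has nontrivial cokernel.

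The paper's proof follows a genuinely different route that sidesteps this. It inducts on $d$ by freezing one basis coordinate in the first factor: for each $k\in\Z_1$, $(\o_1\mid_k)$ is a closed $n$-form over the $\RR^{d-1}$-action on $\HH_2\otimes\cdots\otimes\HH_d$ (Lemma~\ref{lem:closed}). When $n<d-1$, the inductive hypothesis gives a primitive; when $n=d-1$, $(\o_1\mid_k)$ is top-degree over $\RR^{d-1}$, and the paper invokes the top-degree Theorem~\ref{thm:topiurs}, with Proposition~\ref{prop:hrt} supplying the requisite membership in the kernel of invariant distributions. This yields $\eta_1$; then $\theta:=\o(U_1,\cdot)-U_1\eta_1(\cdot)$ is shown to restrict closed at each $k$, and the induction is applied a second time to produce the remaining components of $\eta$. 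Note that your proposal never invokes the top-degree theorem at all, which should have been a warning sign; the paper's lower-degree argument relies on it essentially.
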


\section{Preliminaries}

Let $U = \left(\begin{smallmatrix} 0 & 1 \\ 0 & 0\end{smallmatrix}\right)\subset \Sl(2,\RR)$ be the generator of the horocycle flow. We will take our unipotent $\RR^d$-action to be that generated by $U_i:=U$ coming from each factor of the $d$-fold sum $\Sl(2,\RR)\oplus\dots\oplus\Sl(2,\RR)$.

\subsection*{Irreducible unitary representations}
We work with irreducible unitary representations $\pi_\nu: \PSL(2,\RR)\to\U(\HH_\nu)$ and $\pi_{\bnu}: \PSL(2,\RR)^d\to\U(\HH_{\bnu})$, where $\nu$ is a parameter taking the values
\[
\nu \in \begin{cases}
	i\,\RR &\textrm{principal series representations} \\
	(-1,1)\backslash\{0\} &\textrm{complementary series representations} \\
	2\NN-1 &\textrm{discrete series representations}
\end{cases}
\]
and $\HH_{\bnu}:=\HH_{\nu_1}\otimes\dots\otimes\HH_{\nu_d}$. Each $\HH_\nu$ has a basis $\{u(k)\}_{k\in\Z_\nu}$, where 
\[
	\Z_\nu = \begin{cases}
		\ZZ &\textrm{if $\HH_\nu$ is from the principal or complementary series} \\
		\ZZ_{\geq 0}+n &\textrm{if $\nu=2n-1$, $\HH_\nu$ from discrete series.}
	\end{cases}
\]
Any element $f \in \HH_\nu$ can therefore be written in terms of its coefficients $f=\sum_{k\in\ZZ_\nu} f(k)\,u(k)$. The basis $\{u(k)\}$ consists of eigenvectors for the Laplacian, so Sobolev norms can be easily computed in terms of coefficients. We have $f \in W^t(\HH_\nu)$ if and only if 
\[
	\norm{f}_t^2 = \sum_{k\in\Z_\nu} (1+\mu+2k^2)^t\,\abs{f(k)}^2\,\norm{u(k)}^2 < \infty,
\]
where $\mu$ is the eigenvalue of the Casimir operator $\Box$ corresponding to $\nu$ through $\nu^2 = 1-4\mu$. For convenience, we put $Q_\nu (k) = \mu + 2k^2$. 

We will use the following lemma.

\begin{lemma}\label{lem:diffnorm}
Let $\PSL(2,\RR)\to\U(\HH)$ be a unitary representation with direct decomposition 
\[
	\HH = \int_\RR^\oplus \HH_\l\,ds(\l).
\]
For any $f \in W^{t+1}(\HH)$ with decomposition
\[
f = \int_\RR^\oplus f_\l \,ds(\l)
\]
we have
\[
\norm{U\,f_\l}_t \ll \norm{f_\l}_{t+1}
\]
for $ds$-almost every $\l$.
\end{lemma}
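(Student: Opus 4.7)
The plan is to pass to the fiber of the direct integral: for $ds$-almost every $\l$, the space $\HH_\l$ is unitarily equivalent to an irreducible representation $\HH_{\nu(\l)}$ of $\PSL(2,\RR)$ and $U$ acts fibrewise, so it suffices to prove an estimate of the form $\norm{U g}_t \ll \norm{g}_{t+1}$ for any $g \in W^{t+1}(\HH_\nu)$, with an implied constant independent of $\nu$.

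First I would recall the action of $U$ in the $K$-basis $\{u(k)\}_{k\in\ZZ_\nu}$. Since $U \in \Sl(2,\RR)$ is a real-linear combination of the raising and lowering operators for the compact Cartan, its action takes the form
\[
	U\,u(k) = c_\nu^+(k)\,u(k+2) + c_\nu^-(k)\,u(k-2),
\]
with the usual vanishing of $c_\nu^-$ at the bottom of the discrete series. The explicit ladder formulas, coming from the fact that $L^\mp L^\pm$ differs from the Casimir $\Box$ by a quadratic function of $k$, give the quantitative bound
\[
	\abs{c_\nu^\pm(k)}^2\,\norm{u(k\pm 2)}^2 \ll (1+Q_\nu(k))\,\norm{u(k)}^2
\]
uniformly in $\nu$ and $k$.

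Next I would write $g = \sum_k g(k)\,u(k)$, so that the coefficient of $u(k)$ in $U g$ is $c_\nu^+(k-2)\,g(k-2) + c_\nu^-(k+2)\,g(k+2)$. Substituting into the Sobolev formula, using the triangle inequality to split into two pieces, and invoking the elementary comparison $1+Q_\nu(k) \asymp 1+Q_\nu(k\pm 2)$ (which follows from the fact that $Q_\nu(k\pm 2) - Q_\nu(k) = \pm 8k + 8$ is of lower order than $Q_\nu(k) = \mu + 2k^2$), one obtains after a shift of summation index
\[
	\norm{U g}_t^2 \ll \sum_k (1+Q_\nu(k))^{t+1}\,\abs{g(k)}^2\,\norm{u(k)}^2 = \norm{g}_{t+1}^2,
\]
which gives the desired estimate.

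The only real obstacle is bookkeeping: verifying that both the ladder-coefficient bound and the comparison $1+Q_\nu(k)\asymp 1+Q_\nu(k\pm 2)$ hold with constants independent of $\nu$. This requires a short case check across the three series. In the principal ($\mu \geq 1/4$) and complementary ($\mu \in (0,1/4)$) series the estimates are routine since $\mu \geq 0$. In the discrete series with $\nu = 2n-1$ one has $\mu = -n(n-1) \leq 0$, but the basis starts at $k = n$ so $Q_\nu(n) = n^2 + n > 0$ and there is no lowering out of the representation; the comparison then stays uniform because the shift of $\pm 2$ is absorbed by the quadratic growth of $Q_\nu$. Once these checks are in place, the estimate holds for $ds$-almost every $\l$ as claimed.
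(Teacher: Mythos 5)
Your proposal takes a genuinely different route from the paper, but as written it contains an error in the first step. You assert that $U$ is a real-linear combination of the two $K$-raising and $K$-lowering operators, so that $U\,u(k) = c_\nu^+(k)\,u(k+2) + c_\nu^-(k)\,u(k-2)$. That is not quite right: writing $\Theta = U - V$ for the generator of the compact Cartan, one has $U = \tfrac{1}{2}\Theta + \tfrac{1}{2}(U+V)$, and only the $(U+V)$ part lies in the span of the raising and lowering operators; the $\tfrac{1}{2}\Theta$ part acts diagonally on the $K$-basis. So the correct expansion is
\[
U\,u(k) = c_\nu^0(k)\,u(k) + c_\nu^+(k)\,u(k+1) + c_\nu^-(k)\,u(k-1),
\]
with a diagonal coefficient $c_\nu^0(k)$ proportional to $k$. (Note also that in the convention used in the paper and in Flaminio--Forni, where $Q_\nu(k)=\mu+2k^2$ and the discrete series index set is $\ZZ_{\geq 0}+n$, the ladder operators shift the index by $\pm 1$, not $\pm 2$.) This is a fixable gap rather than a fatal one: since $k^2 \leq Q_\nu(k)$ in all three series (including the discrete series, where $\mu = -n(n-1)$ but $k\geq n$ forces $Q_\nu(k)\geq k^2$), the extra diagonal term also satisfies $\abs{c_\nu^0(k)}^2 \ll 1+Q_\nu(k)$, and once you carry the third term through the Cauchy--Schwarz and reindexing steps the estimate $\norm{Ug}_t^2 \ll \norm{g}_{t+1}^2$ still comes out with a constant independent of $\nu$. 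You should repair the formula and add this third case to your verification.

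For comparison: the paper's own proof avoids all of this by switching to the other standard definition of the Sobolev norm, $\Norm{f}_{t+1}^2 = \sum \Norm{V_{i_1}\cdots V_{i_{t+1}} f}^2$ over basis monomials from $\Sl(2,\RR)$, under which the inequality $\Norm{Uf}_t \leq \Norm{f}_{t+1}$ is immediate, and then invoking equivalence of that norm with the spectral one (with constants depending only on $t$). That argument is shorter and pushes the hard work into a standard equivalence-of-norms fact, whereas your approach is self-contained and makes the fiberwise uniformity completely explicit, at the cost of the ladder-coefficient bookkeeping that you need to get exactly right.
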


\begin{proof}
Notice that from the point of view of another standard definition of Sobolev norm, namely,
\[
\Norm{f}_{t+1}^2 = \sum_{\{V_{i_1}, \dots, V_{i_{t+1}}\}\subset\Sl(2,\RR)}\Norm{V_{i_1}V_{i_2}\dots V_{i_{t+1}}f}^2,
\]
this lemma is obvious, and in fact $\Norm{U_1 f}_t \leq \Norm{f}_{t+1}$. This other norm is equivalent to the norm we are using, meaning that the two are asymptotic in the sense of ``$\asymp$''.  In particular, $\Norm{U_1 f}_t \ll \Norm{f}_{t+1}$, and this continues to hold in almost every component of the direct integral decomposition of $\HH$.
\end{proof}

\subsection*{Irreducible unitary representations of products} 
As hinted above, an irreducible unitary representation of the $d$-fold product $\PSL(2,\RR)^d$ is a $d$-fold tensor product $\HH_{\bnu}=\HH_1\otimes\dots\otimes\HH_d$, where each $\HH_j :=\HH_{\nu_j}$ is an i.u.r.~of $\PSL(2,\RR)$. We now have a basis 
\[
	\{u(\bk)\}_{\bk \in \bZ_{\bnu}} = \{u^{(1)}(k_1)\otimes\dots\otimes u^{(d)}(k_d)\}_{(k_1,\dots,k_d)\in\Z_{\nu_1}\times\dots\times\Z_{\nu_d}}.
\]
Again, we have nice expressions for Sobolev norms. Namely,
\[
	\norm{f}_t^2 = \sum_{\bk\in\bZ_{\bnu}} (1 + \mu_1 +\dots+\mu_d + 2\abs{\bk}^2)^t\,\abs{f(\bk)}^2\,\norm{u(\bk)}^2.
\]
It is convenient to define projected versions of elements of $\HH_{\bnu}$, for example
\[
	(f\mid_{k_j,\dots,k_d}) = \sum_{i=1}^{j-1}\sum_{k_i \in \Z_{\nu_i}} f(k_1,\dots,k_d)\,\norm{u(k_j)}\dots\norm{u(k_d)}\,u(k_1)\otimes\dots\otimes u(k_{j-1})
\]
is $f$ projected to $\HH_{\nu_1}\otimes\dots\otimes\HH_{\nu_{j-1}}$ by fixing $k_j,\dots,k_d$. Easy calculations of Sobolev norms show that
\begin{equation}\label{eqn:obsone}
\norm{(f\mid_{k_j,\dots,k_d})}_\t^2 \leq \norm{f}_\t^2
\end{equation}
and
\begin{equation}\label{eqn:obstwo}
\sum_{k_1\in\Z_{\nu_1}}(1+Q_{\nu_1}(k_1))^\t\,\norm{(f\mid_{k_1})}_\s^2 \leq \norm{f}_{\t+\s}^2.
\end{equation}
We use~\eqref{eqn:obsone} and~\eqref{eqn:obstwo} repeatedly.

\subsection*{Invariant distributions}
Let $\PSL(2,\RR)\to\U(\HH)$ be a unitary representation. A distribution is an element $\D\in\E'(\HH):=(\Cinf(\HH))^*$ of the dual to the space of smooth vectors. The distribution is $U$-invariant if $\LLL_U \D=0$. That is, if $\D(Uv)=0$ for every $v\in\Cinf(\HH)$. Similarly, we define distributions of order $s$ to be elements of $W^{-s}(\HH)$, the dual to the Sobolev space of order $s$. 

Flaminio and Forni~\cite{FF} find all the $U$-invariant distributions in irreducible unitary representations of $\PSL(2,\RR)$. First, there is $\D^+$, defined by
\[
	\D^+(u(k)) = 1 \quad\forall k \in \Z_\nu,
\]
regardless of $\nu$'s value. For discrete series representations, there are no other independent invariant distributions. But for principal and complementary series, we also have
\[
	\D^-(u(k)) = \begin{cases} \displaystyle{\prod_{i=1}^{\abs{k}}\frac{2i-1-\nu}{2i-1+\nu}} &\textrm{if }\nu\neq 0\\
									\displaystyle{\sum_{i=1}^{\abs{k}}\frac{1}{2i-1}} &\textrm{if }\nu=0,
					\end{cases}
\]
where empty products are by convention $1$ and empty sums are by convention $0$.

\begin{lemma}\label{lem:distributionsum}
There is a number $\cee>0$ such that
\[
\sum_{\pm} \sum_{m\in\Z} (1+Q_\nu (m))^{-t}\,\frac{\abs{\D^{\pm}(u(m))}^2}{\norm{u (m)}^2} \ll_{\nu_0, t} \begin{cases}
	\displaystyle{\frac{1}{(1+\mu)^{t-\cee}}} &\textrm{princ., comp.}\\ \\
	\displaystyle{\frac{1}{(1+\mu+2n^2)^{t-\cee}}} &\textrm{disc.}
	\end{cases}
\]
\end{lemma}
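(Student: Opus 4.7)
The plan is to split the sum in two ways: first by the sign $\pm$ separating the contributions of $\D^+$ and $\D^-$, and then by the index $m$ separating the lowest term from the tail. The lowest term will produce the main factor $(1+\mu)^{-t}$ (or $(1+\mu+2n^2)^{-t}$ in the discrete case) of the claimed bound, while the tail sum will contribute only a modest power of $(1+\mu)$ that determines the numerical value of $\cee$.

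First I would obtain uniform pointwise bounds on the coefficients $\abs{\D^{\pm}(u(m))}$. The case of $\D^+$ is trivial: $\abs{\D^+(u(m))}=1$ for every $m$, regardless of $\nu$. For $\D^-$ in the principal series ($\nu\in i\RR$) each factor $(2i-1-\nu)/(2i-1+\nu)$ has modulus $1$, so $\abs{\D^-(u(m))}=1$ uniformly in $m$. For the complementary series the spectral gap in Assumption~\ref{ass:ume} (encoded by $\nu_0$, equivalently $\e_0$) keeps $\nu$ bounded away from the degenerate endpoints and gives $\abs{\D^-(u(m))}\leq C_{\nu_0}$. I would then invoke the standard explicit formulas for $\norm{u(m)}^2$ (ratios of Gamma functions arising from the ladder structure) to obtain a uniform lower bound $\norm{u(m)}^2\gg_{\nu_0} 1$ in the principal/complementary case, and the analogous growth estimate in the discrete case. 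Together these reduce the problem to controlling the series $\sum_{m\in\Z_\nu}(1+\mu+2m^2)^{-t}$ with constants depending only on $\nu_0$ and $t$.

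At this point I would isolate the lowest-index term: the $m=0$ term in the principal/complementary case contributes $(1+\mu)^{-t}$, and the $m=n$ term in the discrete case contributes $(1+\mu+2n^2)^{-t}$, either of which already matches the right-hand side (with $\cee=0$). The tail is handled by comparing with an integral,
\[
	\sum_{\abs{m}\geq 1}(1+\mu+2m^2)^{-t} \ll \int_1^\infty (1+\mu+2x^2)^{-t}\,dx \ll (1+\mu)^{-t+1/2},
\]
valid for $t$ sufficiently large, which gives the full bound with any $\cee\geq 1/2$.

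The main obstacle will be nailing down the uniform lower bound on $\norm{u(m)}^2$ and the uniform upper bound on $\abs{\D^-(u(m))}^2$ in the complementary series: both depend on Gamma-function ratios that must be controlled as $\nu$ ranges over its allowed set, and both are what the spectral gap hypothesis is there to guarantee. Once those normalizations are pinned down, the remaining splitting-plus-integral estimate is routine and yields the stated inequality.
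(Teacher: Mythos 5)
Your overall architecture is the same as the paper's: reduce the $\D^{\pm}$ contributions to the decaying series $\sum_m (1+\mu+2m^2)^{-t}$ and then compare that to an integral to get $(1+\mu)^{1/2-t}$. The principal--series case is handled exactly as in the paper. But there is a genuine gap in how you treat the complementary (and discrete) series, and it is precisely the issue your own last paragraph flags as the ``main obstacle.''

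In the complementary series, neither of the uniform bounds you invoke actually holds. The coefficient
\[
\D^-(u(m))=\prod_{i=1}^{\abs{m}}\frac{2i-1-\nu}{2i-1+\nu}
\]
is a product of real factors all bounded away from $1$, so it decays or grows like a power $(1+\abs{m})^{\mp\nu}$; and $\norm{u(m)}^2$ behaves like a \emph{reciprocal} power $(1+\abs{m})^{\pm\nu}$. So $\abs{\D^-(u(m))}$ is not bounded by a constant $C_{\nu_0}$ uniformly in $m$, and $\norm{u(m)}^2$ is not bounded below by a positive constant: depending on the sign of $\nu$, one of them tends to $0$ and the other to $\infty$. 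What one actually needs is a quantitative estimate for the \emph{ratio} $\abs{\D^-(u(m))}^2/\norm{u(m)}^2$; the paper uses precisely Lemma~2.1 of Flaminio--Forni, which says $\norm{u(m)}^4 \ll \left(\tfrac{1-\nu}{1+\nu}\right)^2 (1+m)^{-2\nu}$, so the relevant quantity grows at worst like $m^{2\abs{\nu}}$. The spectral gap ($\abs{\nu}\le \nu_0<1$) is then used to make this polynomial growth strictly sub-quadratic so that, after absorbing it into a slightly larger $t$ (or equivalently a $\cee$ that depends on $\nu_0$), the integral comparison still closes. Your proposal skips exactly this step by asserting (false) uniform pointwise bounds, so as written it does not prove the complementary-series case. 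The discrete-series case has the analogous issue: $\norm{u(n+m)}^2$ grows like $(1+m)^{\nu}$ with $\nu=2n-1$, and the paper again invokes the FF normalization estimate; here the growth works in your favor, but you still cannot quote a uniform lower bound on $\norm{u(m)}^2$.

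One smaller point: $\nu_0$ and $\e_0$ are not ``equivalent.'' In this paper $\nu_0<1$ is the spectral-gap parameter keeping $\abs{\nu}$ away from the endpoint $1$, while $\e_0$ from Assumption~\ref{ass:ume} keeps $\nu$ away from $0$. The estimate of Lemma~\ref{lem:distributionsum} depends only on $\nu_0$ (it is stated as $\ll_{\nu_0,t}$), and $\e_0$ plays no role here; $\e_0$ is used later, in Lemma~\ref{lem:squaresumphi}, to control the $\varphi_{\pm}$'s.
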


\begin{proof}
The proof is a calculation in each of the three families of irreducible unitary representations of $\PSL(2,\RR)$. 

\paragraph{For principal series representations,} where $\nu \in i\RR$,
\begin{multline}\label{eq:rewrite}
\sum_{\pm} \sum_{m\in\Z} (1+Q_\nu (m))^{-t}\,\frac{\abs{\D^{\pm}(u(m))}^2}{\norm{u (m)}^2} \\ = \sum_{m\in\Z} (1+Q_\nu (m))^{-t}\,\left(\abs{\D^{+}(u(m))}^2 + \abs{\D^{-}(u(m))}^2\right).
\end{multline}
If $\nu\neq 0$, this is rewritten
\begin{multline*}
\sum_{m\in\Z} (1+Q_\nu (m))^{-t}\,\left[1 + \prod_{i=1}^{\abs{m}}\Abs{\frac{2i-1-\nu}{2i-1+\nu}}^2\right] \\
= \sum_{m\in\Z} (1+Q_\nu (m))^{-t}\,\left[1 + \prod_{i=1}^{\abs{m}}\frac{(2i-1)^2+\abs{\nu}^2}{(2i-1)^2+\abs{\nu}^2}\right],
\end{multline*}
since $\nu$ is purely imaginary. Hence we only need to bound
\[
\sum_{m\in\Z} (1+Q_\nu (m))^{-t} = \sum_{m\in\Z} (1+\mu+2m^2)^{-t},
\]
which for $t>0$, can be compared to the integral $\int\frac{dx}{(1+\mu+2x^2)^t}$ to obtain the desired bound $\ll_t (1+\mu)^{1/2-t}$.

Now, if $\nu=0$,~\eqref{eq:rewrite} is rewritten
\[
\sum_{m\in\Z} (1+Q_0 (m))^{-t}\,\left[1 + \Abs{\sum_{i=1}^{\abs{m}}\frac{1}{2i-1}}^2\right].
\]
Comparing to an integral, we bound this by
\begin{multline*} 
\leq \sum_{m\in\Z} (1+Q_0 (m))^{-t}\,\left[1 + \Abs{1+\frac{1}{2}\log(2m-1)}^2\right] \\
= \sum_{m\in\Z} (1+1/4+ 2m^2)^{-t}\,\left[1 + \Abs{1+\frac{1}{2}\log(2m-1)}^2\right]
\end{multline*}
which is also bounded by $\ll_t (1+\mu)^{1/2-t}$.

\paragraph{For complementary series representations,}
where $\nu \in (-1,1)\backslash\{0\}$ we can rewrite~\eqref{eq:rewrite} as 
\begin{multline*}
\sum_{m\in\Z} (1+Q_\nu (m))^{-t}\,\left[1 + \prod_{i=1}^{\abs{m}}\Abs{\frac{2i-1-\nu}{2i-1+\nu}}^2\right] \\
= \sum_{m\in\Z} (1+Q_\nu (m))^{-t}\,\left[1 + \norm{u(m)}^4\right],
\end{multline*}
which, applying~\cite[Lemma~2.1]{FF}, is bounded by
\[
	\ll \sum_{m\in\Z} (1+Q_\nu (m))^{-t}\,\left[1+ \left(\frac{1-\nu}{1+\nu}\right)^2\,(1+m)^{-2\nu}\right],
\]
which is in turn bounded $\ll_{\nu_0,t} (1+\mu)^{1/2-t}$ where $\abs{\nu}\leq\nu_0<1$.

\paragraph{For discrete series representations,} where $\nu=2n-1$, the expression to bound becomes
\[
\sum_{m\in\Z} (1+Q_\nu (m))^{-t}\,\frac{\abs{\D^{+}(u(m))}^2}{\norm{u (m)}^2} = \sum_{m\in\NN} (1+Q_\nu (n+m))^{-t}\,\norm{u (n+m)}^{-2}.
\]
Again, we appeal to~\cite[Lemma~2.1]{FF} to bound this by
\[
\ll \sum_{m\in\NN} (1+Q_\nu (n+m))^{-t}\,\left(\frac{1}{m+1}\right)^{-\nu},
\]
which is bounded $\ll (1+\mu+2n^2)^{\cee-t}$.
\end{proof}

\section{Top degree}\label{sec:topdegree}

The strategy for the top-degree part of Theorem~\ref{thm:unipotent} is to work in an irreducible unitary representation $\HH_1\otimes\dots\otimes\HH_d := \HH_\otimes \otimes \HH_d$ and write $f$ as a sum $f_\otimes + f_d$, in such a way that $(f_\otimes \mid_\ell)\in\HH_\otimes$ is always in the kernel of all $U_1,\dots,U_{d-1}$-invariant distributions, and $(f_d \mid_{\bk})\in\HH_d$ is always in the kernel of every $U_d$-invariant distribution. This will facilitate an induction on $d$, with base case given by~\cite[Theorem~$4.1$]{FF}.

\paragraph{}

In any irreducible unitary representation $\HH_\mu$, choose $\varphi_\pm \in W^s(\HH_\mu)$ so that 
\begin{equation}\label{eqn:varphiconditions}
\begin{split}
\varphi_+ \in \ker\D_\mu^{-} \textrm{ and }\varphi_- \in \ker\D_\mu^{+} \\ 
\D^+ (\varphi_+)=1 \textrm{ and } \D^- (\varphi_-)=1.
\end{split}
\end{equation}
For example, one can easily check that the following choices satisfy~\eqref{eqn:varphiconditions}. If $\nu\neq 0$, and $\pi_\nu$ is from the principal or complementary series, 
\begin{equation}\label{eqn:varphidefinitions1}
\begin{split}
\varphi_+ (k) &= \begin{cases} \frac{\nu-1}{2\nu} &\textrm{if}\quad k=0\\ 
							\frac{\nu+1}{2\nu} &\textrm{if} \quad k=1\\
							0  &\textrm{otherwise} \end{cases} \\
\varphi_- (k) &= \begin{cases} \frac{\nu+1}{2\nu} &\textrm{if}\quad k=0\\ 
							-\frac{\nu+1}{2\nu} &\textrm{if} \quad k=1\\
							0  &\textrm{otherwise.} \end{cases} \\
\end{split}
\end{equation}
If $\nu = 0$, 
\begin{equation}\label{eqn:varphidefinitions2}
\begin{split}
\varphi_+ (k) &= \begin{cases} 1 &\textrm{if}\quad k=0\\ 
							0  &\textrm{otherwise} \end{cases} \\
\varphi_- (k) &= \begin{cases} -1 &\textrm{if}\quad k=0\\ 
							1 &\textrm{if} \quad k=1\\
							0  &\textrm{otherwise.} \end{cases} \\
\end{split}
\end{equation}
And if $\pi_\nu$ is from the discrete series, there is only one invariant distribution, $\D^+$, so we only choose $\varphi_+$ as 
\begin{equation}\label{eqn:varphidefinitions3}
\begin{split}
\varphi_+ (k) &= \begin{cases} 1 &\textrm{if}\quad k=0\\ 
							0  &\textrm{otherwise.} \end{cases} \\
\end{split}
\end{equation}
and set $\varphi_- \equiv 0$ for convenience. Bear in mind that these are not the only choices that would ``do the trick.'' We will keep $\varphi_\pm$ defined as in~\eqref{eqn:varphidefinitions1},~\eqref{eqn:varphidefinitions2},~\eqref{eqn:varphidefinitions3}, but other choices would work as well. See \S\ref{sec:discussion} for a discussion of $\varphi$'s.

\begin{lemma}\label{lem:squaresumphi}
\[ 
\sum_\pm \norm{\varphi_\pm}_t^2 \ll_{\e_0,\nu_0} \begin{cases}
									(1+\mu)^t &\textrm{in principal and complementary series}  \\
									(1+\mu+2n^2)^t &\textrm{in discrete series}
										\end{cases}
\]
\end{lemma}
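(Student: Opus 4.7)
The key observation is that each $\varphi_\pm$ defined in \eqref{eqn:varphidefinitions1}--\eqref{eqn:varphidefinitions3} has support on at most two basis vectors (namely $\{0,1\}$ in the principal and complementary series, and $\{n\}$ in the discrete series), so that in the Sobolev-norm expansion
\[
\norm{\varphi_\pm}_t^2 = \sum_{k\in\Z_\nu}(1+Q_\nu(k))^t\,\abs{\varphi_\pm(k)}^2\,\norm{u(k)}^2
\]
only one or two terms survive. The plan is therefore to bound each surviving term in each of the three families, using Assumption~\ref{ass:ume} (which gives $\abs{\nu}\geq \e_0$ whenever $\nu\neq 0$) and $\abs{\nu}\leq \nu_0$ in the complementary series, together with the estimates on $\norm{u(k)}^2$ already used in Lemma~\ref{lem:distributionsum}.

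First I would dispatch the \emph{principal series}. Since the surviving $k$'s are $0$ and $1$, the factor $(1+Q_\nu(k))^t$ is trivially $\leq (3+\mu)^t\ll (1+\mu)^t$. For $\nu\in i\RR\setminus\{0\}$, write $|\nu\pm 1|^2 = 1+|\nu|^2$, so that the coefficients $|\varphi_\pm(k)|^2$ take the form $(1+|\nu|^2)/(4|\nu|^2)$, which is bounded in terms of $\e_0$ by Assumption~\ref{ass:ume}. For $\nu=0$ the coefficients are literally $0$ or $1$. The basis vectors satisfy $\norm{u(k)}=1$, so the two-term bound gives the desired $\ll_{\e_0}(1+\mu)^t$.

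Next I would treat the \emph{complementary series} case $\nu\in(-1,1)\setminus\{0\}$. The same two indices $k=0,1$ appear, so again $(1+Q_\nu(k))^t\ll (1+\mu)^t$. The coefficients $|\varphi_\pm(k)|^2=|\nu\pm 1|^2/(4|\nu|^2)$ are controlled using $\e_0\leq|\nu|\leq\nu_0<1$. The only nontrivial point is the basis norm $\norm{u(1)}^2$; but $\norm{u(0)}^2=1$ and $\norm{u(1)}^2$ is bounded by a constant depending only on $\nu_0$ via \cite[Lemma~2.1]{FF} (already cited in the proof of Lemma~\ref{lem:distributionsum}). This yields $\ll_{\e_0,\nu_0}(1+\mu)^t$.

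Finally, the \emph{discrete series} case is essentially trivial: $\varphi_-\equiv 0$ and $\varphi_+$ is supported at the single bottom index, with coefficient $1$. The resulting single term is $(1+\mu+2n^2)^t\,\norm{u(\cdot)}^2$, and the basis-norm factor is bounded by an absolute constant (or again by a $\nu_0$-dependent constant via~\cite[Lemma~2.1]{FF}), giving the claimed bound. The only real friction in the argument is bookkeeping the complementary-series basis norms, but this has already been done in the preceding lemma, so no new technical obstacle arises; the proof is a direct assembly of the three computations.
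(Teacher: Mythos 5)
Your proposal is correct and follows essentially the same route as the paper's proof: exploit the finite support of $\varphi_\pm$ so that only one or two terms of the Sobolev expansion survive, then bound term by term in the three series using Assumption~\ref{ass:ume} (to keep $\abs{\nu}$ away from $0$ in the principal series) and the spectral-gap bound $\abs{\nu}\leq\nu_0<1$ together with the $\norm{u(1)}^2$ estimate from~\cite[Lemma~2.1]{FF} in the complementary series. The paper's write-up is a bit more terse about the basis-norm factors (it substitutes $\norm{u(1)}^2=\abs{\tfrac{1-\nu}{1+\nu}}$ explicitly in the complementary case and suppresses the norm factor entirely in the discrete case), but the substance is identical, and your reading of the discrete-series index $k=0$ as the bottom weight $u(n)$ is the correct interpretation of the paper's re-indexing.
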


\begin{proof}
Let $t>0$, and compute
\begin{align}
	\sum_{\pm}\norm{\varphi_\pm}_t^2 &= \sum_{k\in\Z_\nu} \left(1 + Q_\nu (k)\right)^t\,\left(\Abs{\varphi_+(k)}^2 + \Abs{\varphi_-(k)}^2\right)\,\norm{u(k)}^2 \nonumber \\
		\begin{split}
		&= \left(1 + Q_\nu (0)\right)^t\,\left(\Abs{\varphi_+(0)}^2 + \Abs{\varphi_-(0)}^2\right)\,\norm{u(0)}^2 \\
		&\quad+\left(1 + Q_\nu (1)\right)^t\,\left(\Abs{\varphi_+(1)}^2 + \Abs{\varphi_-(1)}^2\right)\,\norm{u(1)}^2. 
		\end{split}\label{eq:split}
\end{align}
Now we just bound in all possible cases. 

\paragraph{For principal series representations,} where $\nu\neq 0$, the expression~\eqref{eq:split} becomes
\[
\left(1 + \mu\right)^t\,\left(\Abs{\frac{\nu-1}{2\nu}}^2 + \Abs{\frac{\nu+1}{2\nu}}^2\right)
		+\left(3 + \mu\right)^t\,\left(\Abs{\frac{\nu+1}{2\nu}}^2 + \Abs{-\frac{\nu+1}{2\nu}}^2\right),
\]
and since $\nu$ is purely imaginary we can bound by
\[
\ll \left(1 + \mu\right)^t\,\frac{\abs{\nu}^2+1}{\abs{\nu}^2} \ll_{\e_0} \left(1+\mu\right)^t.
\]
This is what Assumption~\ref{ass:ume} was for.

On the other hand, if $\nu=0$,~\eqref{eq:split} becomes
\[
\left(1 + \mu\right)^t\,\left(\Abs{1}^2 + \Abs{-1}^2\right)
		+\left(3 + \mu\right)^t\,\left(\Abs{0}^2 + \Abs{1}^2\right),
\]
so the bound is obvious.

\paragraph{For complementary series representations,} where $\nu\neq 0$, the expression~\eqref{eq:split} becomes
\begin{multline*}
\left(1 + \mu\right)^t\,\left(\Abs{\frac{\nu-1}{2\nu}}^2 + \Abs{\frac{\nu+1}{2\nu}}^2\right)\\
		+\left(3 + \mu\right)^t\,\left(\Abs{\frac{\nu+1}{2\nu}}^2 + \Abs{-\frac{\nu+1}{2\nu}}^2\right)\Abs{\frac{1-\nu}{1+\nu}},
\end{multline*}
which we can bound on $\abs{\nu}\in(\e_0,\nu_0)\subset(0,1)$ by
\[
	\ll_{\e_0,\nu_0} \left(1+\mu\right)^t.
\]
Again, Assumption~\ref{ass:ume} was made for this.

\paragraph{For discrete series representations,} where $\nu = 2n-1$ for some $n$, the expression~\eqref{eq:split} becomes 
\[
\left(1 + Q_\nu (0)\right)^t = (1+\mu+2n^2)^t
\]
which is exactly what we want to bound by.
\end{proof}

\paragraph{}
We now define for $(\bk,\ell) \in \bZ_\times \times\Z_d$
\begin{align}\label{eqn:fotimes}
	f_{\otimes} (\bk,\ell) &= \sum_{\pm} \varphi_\pm (\ell)\,\sum_{m\in \Z_d} f(\bk, m)\,\D^{\pm}(u_d(m))
\end{align}
and put $f_d = f - f_\otimes$. Let us prove that $f_\otimes$ (and therefore $f_d$ also) retains some of $f$'s Sobolev regularity.

\begin{lemma}\label{lem:regularity}
There is a function $L:\RR_+\to \RR_+$ such that $$\norm{f_\otimes}_t \ll_{\nu_0, \e_0,t} \norm{f}_{L(t)}$$ for all $t>0$.
\end{lemma}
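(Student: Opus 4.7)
The plan is to exploit the tensor-product structure of $f_\otimes$. Defining $c_\pm \in \HH_\otimes$ by its coefficients $c_\pm(\bk) := \sum_{m \in \Z_d} f(\bk,m)\,\D^\pm(u_d(m))$, formula~\eqref{eqn:fotimes} factorizes as $f_\otimes = c_+ \otimes \varphi_+ + c_- \otimes \varphi_-$ in $\HH_\otimes \otimes \HH_d$. Writing $Q_\otimes(\bk) := \mu_1 + \dots + \mu_{d-1} + 2|\bk|^2$, the full Sobolev weight on $\HH_\otimes \otimes \HH_d$ is $(1 + Q_\otimes(\bk) + Q_{\nu_d}(\ell))^t$. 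Combining $|x+y|^2 \leq 2(|x|^2 + |y|^2)$ with the weight splitting $(1+a+b)^t \leq 2^t((1+a)^t + (1+b)^t)$ immediately gives the tensor-product estimate
\[
\norm{f_\otimes}_t^2 \ll_t \sum_\pm \left(\norm{c_\pm}_t^2\,\norm{\varphi_\pm}_0^2 + \norm{c_\pm}_0^2\,\norm{\varphi_\pm}_t^2\right),
\]
reducing matters to bounding each factor separately.

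For the $c_\pm$ factors, I would apply Cauchy--Schwarz to $c_\pm(\bk)$ with weights $(1+Q_{\nu_d}(m))^{\pm r/2}$ for a parameter $r \geq 0$ to be chosen later, obtaining
\[
|c_\pm(\bk)|^2 \leq A_\pm^{(r)} \cdot \sum_m (1+Q_{\nu_d}(m))^r |f(\bk,m)|^2\,\norm{u_d(m)}^2,
\]
where $A_\pm^{(r)} := \sum_m (1+Q_{\nu_d}(m))^{-r}\abs{\D^\pm(u_d(m))}^2/\norm{u_d(m)}^2$. Multiplying by $(1+Q_\otimes(\bk))^s\,\norm{u(\bk)}^2$, summing over $\bk$, and using the elementary bound $(1+a)^s(1+b)^r \leq (1+a+b)^{s+r}$ yields $\norm{c_\pm}_s^2 \leq A_\pm^{(r)}\,\norm{f}_{s+r}^2$. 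Then Lemma~\ref{lem:distributionsum} gives $\sum_\pm A_\pm^{(r)} \ll_{\nu_0,r} (1+\mu_d)^{\cee-r}$ and Lemma~\ref{lem:squaresumphi} gives $\sum_\pm \norm{\varphi_\pm}_t^2 \ll_{\e_0,\nu_0}(1+\mu_d)^t$ (with the analogous discrete-series bounds using $\mu_d + 2n^2$). Applying $\sum_\pm a_\pm b_\pm \leq (\sum_\pm a_\pm)(\sum_\pm b_\pm)$ for nonnegative sequences, I would arrive at
\[
\norm{f_\otimes}_t^2 \ll_{\nu_0,\e_0,t} (1+\mu_d)^{\cee-r}\,\norm{f}_{t+r}^2 + (1+\mu_d)^{\cee+t-r}\,\norm{f}_r^2.
\]
Choosing $r := \cee+t$ makes the first prefactor $(1+\mu_d)^{-t} \leq 1$ and the second equal to $1$, so $\norm{f_\otimes}_t^2 \ll \norm{f}_{2t+\cee}^2$ and $L(t) := 2t + \cee$ works. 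The discrete-series case is handled identically.

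The main technical obstacle I expect is the bookkeeping of spectral parameters: the polynomial-in-$\mu_d$ (respectively $\mu_d + 2n^2$) prefactors produced by the two lemmas must cancel against or be absorbed into the full Sobolev weight of $f$. The key reconciler is the bound $(1+\mu_d)^\alpha(1+Q_\otimes)^\beta \leq (1+Q_\otimes + Q_{\nu_d})^{\alpha+\beta}$ for $\alpha,\beta \geq 0$ together with the discrete-series analog $\mu_d + 2n^2 \leq Q_{\nu_d}(m)$ for every $m \in \Z_{\nu_d}$; choosing the intermediate parameter $r$ as above balances everything without invoking such absorption.
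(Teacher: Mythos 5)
Your proof is correct and is essentially the paper's argument: both hinge on Cauchy--Schwarz applied to the $m$-sum with an adjustable weight $(1+Q_d(m))^{\pm r/2}$, invoke Lemmas~\ref{lem:distributionsum} and~\ref{lem:squaresumphi}, and then balance the auxiliary exponent to land on $L(t)=2t+\cee$. The only cosmetic difference is in how the cross-weight $(1+Q_\otimes+Q_d)^t$ is distributed: you first write $f_\otimes=\sum_\pm c_\pm\otimes\varphi_\pm$ and split the weight additively via $(1+a+b)^t\ll_t(1+a)^t+(1+b)^t$, whereas the paper applies Cauchy--Schwarz directly to the double sum, uses the multiplicative split $(1+a+b)^t\leq(1+a)^t(1+b)^t$, and closes with inequality~\eqref{eqn:obstwo} --- both routes yield the same bound.
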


\begin{proof}
We compute
\begin{multline*}
\norm{f_\otimes}_t^2 = \sum_{\bk,\ell}(1+Q_+ (\bk) + Q_d (\ell))^t\,\abs{f_\otimes (\bk,\ell)}^2\,\norm{u_\otimes (\bk)\otimes u_d (\ell)}^2 \\
	= \sum_{\bk,\ell}(1+Q_+ (\bk) + Q_d (\ell))^t\,\Abs{\sum_{\{+,-\}} \varphi_\pm (\ell) \sum_{m\in\Z_d} f(\bk,m)\D^{\pm}(u_d(m))}^2\,\norm{u_\otimes (\bk)\otimes u_d (\ell)}^2
\end{multline*}
and by repeated use of the Cauchy--Schwartz Inequality,
\begin{multline*}
	\leq \sum_{\bk,\ell}(1+Q_+ (\bk) + Q_d (\ell))^t \sum_{\pm} \Abs{\varphi_\pm (\ell)}^2\,\norm{u_d (\ell)}^2\\ 
	\times \sum_{\pm}\sum_{m\in\Z_d} (1+Q_d (m))^{t'}\,\abs{f(\bk,m)}^2\,\norm{u_\otimes (\bk)\otimes u_d (m)}^2 \sum_{m\in\Z_d} (1+Q_d (m))^{-t'}\,\frac{\abs{\D^{\pm}(u_d(m))}^2}{\,\norm{u_d (m)}^2}.
\end{multline*}
Re-writing this using~\eqref{eqn:obstwo}, we obtain
\[
	\leq \norm{f}_{t+t'}^2 \,\sum_{\pm} \norm{\varphi_\pm}_t^2 \, \sum_{\pm} \sum_{m\in\Z_d} (1+Q_d (m))^{-t'}\,\frac{\abs{\D^{\pm}(u_d(m))}^2}{\,\norm{u_d (m)}^2}.
\]
If we choose $t'=t+\cee$, Lemmas~\ref{lem:squaresumphi} and~\ref{lem:distributionsum} imply 
\[
	\ll_{\nu_0,\e_0,t} \norm{f}_{2t+\cee}^2,
\]
which is the lemma, with $L(t)=2t+\cee$.
\end{proof}

\begin{lemma}\label{lem:kernels}
Suppose $f \in\ker\II_{U_1,\dots,U_d}(\HH_\otimes \otimes\HH_d)$. Then for every $\ell \in \Z_d$, we have $(f_\otimes\mid_\ell)\in\ker\II_{U_1,\dots,U_{d-1}}(\HH_\otimes)$ and for every $\bk\in\bZ_\times$, we have $(f_d \mid_{\bk})\in\ker\II_{U_d}(\HH_d)$.
\end{lemma}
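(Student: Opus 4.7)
The lemma reduces to a formal computation once one isolates the crucial property of the $\varphi_\pm$. By~\eqref{eqn:varphiconditions}, these vectors are biorthogonal to the $U_d$-invariant distributions on $\HH_d$:
\[
\D^+_d(\varphi_+) = \D^-_d(\varphi_-) = 1, \qquad \D^-_d(\varphi_+) = \D^+_d(\varphi_-) = 0,
\]
i.e., $\D^\pm_d(\varphi_{\pm'}) = \delta_{\pm,\pm'}$. The plan is to treat the two claims of the lemma separately, with biorthogonality doing the heavy lifting in each case.

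The second claim, that $(f_d\mid_{\bk})\in\ker\II_{U_d}(\HH_d)$, is actually a tautology of the construction of $f_\otimes$ and requires no hypothesis on $f$ at all. Fix $\bk$ and let $\D^\pm_d$ be a Flaminio--Forni $U_d$-invariant distribution on $\HH_d$. Substitute $f_d=f-f_\otimes$, expand via~\eqref{eqn:fotimes}, and compute $\D^\pm_d((f_\otimes\mid_{\bk}))$ by interchanging the $\ell$-sum with the $\pm'$-sum; the inner $\ell$-sum becomes $\sum_\ell \varphi_{\pm'}(\ell)\,\D^\pm_d(u_d(\ell)) = \D^\pm_d(\varphi_{\pm'})$, and biorthogonality collapses the $\pm'$-sum to its $\pm'=\pm$ term. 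What remains is exactly $\norm{u_\otimes(\bk)}\sum_m f(\bk,m)\,\D^\pm_d(u_d(m)) = \D^\pm_d((f\mid_{\bk}))$, so $\D^\pm_d((f_d\mid_{\bk})) = 0$.

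The first claim uses the hypothesis on $f$. Fix $\ell$ and let $\D^{(\otimes)}$ be any $U_1,\dots,U_{d-1}$-invariant distribution on $\HH_\otimes$. A direct expansion of $(f_\otimes\mid_\ell)$ in the basis, together with~\eqref{eqn:fotimes}, yields
\[
\D^{(\otimes)}((f_\otimes\mid_\ell)) = \norm{u_d(\ell)}\sum_\pm \varphi_\pm(\ell)\,\bigl(\D^{(\otimes)}\otimes\D^\pm_d\bigr)(f),
\]
where $\D^{(\otimes)}\otimes\D^\pm_d$ denotes the functional on $\HH$ acting on the basis by $u_\otimes(\bk)\otimes u_d(m)\mapsto \D^{(\otimes)}(u_\otimes(\bk))\,\D^\pm_d(u_d(m))$. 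Since $\D^{(\otimes)}$ is $U_i$-invariant for every $i<d$ (and $U_i$ acts trivially on the $\HH_d$-factor) while $\D^\pm_d$ is $U_d$-invariant (and $U_d$ acts trivially on the $\HH_\otimes$-factor), the tensor product is $U_1,\dots,U_d$-invariant on $\HH$, and hence annihilates $f$ by hypothesis.

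The one point requiring real verification is that $\D^{(\otimes)}\otimes\D^\pm_d$ defines a genuine continuous distribution on $\Cinf(\HH)$ rather than a merely formal object. This reduces to a Sobolev-order bound: Lemma~\ref{lem:distributionsum} controls the order of $\D^\pm_d$, an analogous inductive estimate controls $\D^{(\otimes)}$, and the resulting tensor-product series converges absolutely against smooth vectors of sufficiently high Sobolev norm. Once this technicality is dispatched, each step above is straightforward algebra.
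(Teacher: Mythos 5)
Your proof is correct and follows essentially the same route as the paper's: for the second claim you exploit the biorthogonality $\D^\bullet(\varphi_\pm)=\delta_{\bullet,\pm}$ (and correctly observe, as the paper leaves implicit, that the hypothesis on $f$ is not needed there), and for the first claim you rearrange $\D^{(\otimes)}(f_\otimes\mid_\ell)$ into a linear combination of tensor-product distributions $\D^{(\otimes)}\otimes\D^\pm_d$ applied to $f$, each of which is $U_1,\dots,U_d$-invariant and so annihilates $f$. The paper phrases the first computation in terms of the spanning set $\{\D^\bullet\}_{\bullet\in\{+,-\}^{d-1}}$ rather than an arbitrary invariant $\D^{(\otimes)}$, but this is a cosmetic difference in the same argument.
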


\begin{proof}
This is essentially~\cite[Lemma~4.3]{Ramhc} and~\cite[Lemma~13.3]{Ramhcii}, but since our $f_\otimes$ is now defined differently, it is worth reproducing the proof for this new scenario.

We calculate, for $\bullet$ a multi-index of length $d-1$ consisting of $+$'s and $-$'s,
\begin{align*}
	\D^{\bullet} (f_\otimes\mid_\ell) &= \sum_{\bk \in \bZ_\times}f_\otimes(\bk,\ell)\,\D^\bullet (u_\otimes (\bk)) \\
		&= \sum_{\bk \in \bZ_\times}\left[\sum_{\pm} \varphi_\pm (\ell)\,\sum_{m\in \Z_d} f(\bk, m)\,\D^{\pm}(u_d(m))\right]\,\D^\bullet (u_\otimes (\bk)) \\
		&= \sum_{\pm} \varphi_\pm (\ell)\,\left(\D^{\bullet, +}(f) + \D^{\bullet, -}(f)\right) = 0,
\end{align*}
and for $\bullet \in \{+,-\}$,
\begin{align*}
	\D^\bullet (f_d \mid_{\bk}) &= \sum_{\ell \in\Z_d}\left[f(\bk,\ell) - f_\otimes (\bk,\ell)\right]\D^\bullet (u_d (\ell)) \\
		&= \D^\bullet (f\mid_{\bk}) - \sum_\pm \D^\bullet(\varphi_\pm)\sum_{m\in\Z_d}f(\bk,m)\,\D^\pm (u_d(m)) \\
		&=0,
\end{align*}
because $\D^\bullet (\varphi_\pm)= \d_{\bullet,\pm}$.
\end{proof}

\begin{theorem}[Version of Theorem~\ref{thm:sobtop} for i.u.r.s]\label{thm:topiurs}
Suppose $f \in \Cinf(\HH_1\otimes\dots\otimes\HH_d)$ lies in the kernel of all $U_1,\dots,U_d$-invariant distributions. Then there exists a solution $g_1,\dots,g_d \in \Cinf(\HH_1\otimes\dots\otimes\HH_d)$ to the degree-$d$ coboundary equation for $f$ that satisfies the Sobolev estimates $\norm{g_i}_t \ll_{\nu_0, \e_0, t} \norm{f}_{\s_d}$, where $\s_d := \s_d (t)$ is some increasing function of $t>0$.
\end{theorem}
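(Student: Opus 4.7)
The argument is by induction on $d$. The base case $d=1$ is the top-degree portion of the Flaminio--Forni Theorem~4.1 in~\cite{FF}, which furnishes a smooth primitive $g_1 \in \Cinf(\HH_\nu)$ with $U g_1 = f$ and Sobolev bound $\norm{g_1}_t \ll_{\nu_0,t} \norm{f}_{\s_1(t)}$ for some increasing function $\s_1$. For the inductive step, I would split $f = f_\otimes + f_d$ according to~\eqref{eqn:fotimes} and recall from Lemma~\ref{lem:kernels} that every fiber $(f_d \mid_{\bk})$ lies in $\ker \II_{U_d}(\HH_d)$ and every fiber $(f_\otimes \mid_{\ell})$ lies in $\ker \II_{U_1,\dots,U_{d-1}}(\HH_\otimes)$. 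The plan is to solve each piece fiberwise and reassemble.

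For $f_d$, apply the $d=1$ case to $(f_d\mid_{\bk}) \in \HH_d$ to obtain $\psi(\bk,\cdot) \in \Cinf(\HH_d)$ with $U_d \psi(\bk,\cdot) = (f_d\mid_{\bk})$ and $\norm{\psi(\bk,\cdot)}_t \ll \norm{(f_d\mid_{\bk})}_{\s_1(t)}$. Define $g_d \in \HH_\otimes \otimes \HH_d$ by the coefficients $g_d(\bk,\ell) := \psi(\bk,\ell)$; then $U_d g_d = f_d$. For $f_\otimes$, apply the inductive hypothesis in the fiber $\HH_\otimes$ to each $(f_\otimes\mid_{\ell})$, obtaining $\chi_i(\cdot,\ell) \in \Cinf(\HH_\otimes)$ with $\sum_{i=1}^{d-1} U_i \chi_i(\cdot,\ell) = (f_\otimes\mid_{\ell})$ and $\norm{\chi_i(\cdot,\ell)}_t \ll \norm{(f_\otimes\mid_{\ell})}_{\s_{d-1}(t)}$. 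Setting $g_i(\bk,\ell) := \chi_i(\bk,\ell)$ for $i=1,\dots,d-1$ yields $\sum_{i=1}^{d-1} U_i g_i = f_\otimes$. Summing gives $\sum_{i=1}^d U_i g_i = f$.

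For the Sobolev estimates, I would use the elementary inequality $(1 + Q_\otimes(\bk) + Q_d(\ell))^t \leq (1+Q_\otimes(\bk))^t (1+Q_d(\ell))^t$ to decompose the assembled Sobolev norms across the two tensor factors, and then iterate a multi-coordinate version of~\eqref{eqn:obstwo} to gather fiberwise bounds into a global norm. Concretely, $\norm{g_d}_t^2 \ll \sum_\bk (1+Q_\otimes(\bk))^t \norm{\psi(\bk,\cdot)}_t^2 \norm{u_\otimes(\bk)}^2 \ll \norm{f_d}_{t+\s_1(t)}^2$, and analogously $\norm{g_i}_t \ll \norm{f_\otimes}_{t+\s_{d-1}(t)}$ for $i<d$. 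Lemma~\ref{lem:regularity} then converts bounds on $\norm{f_\otimes}$ and on $\norm{f_d} \leq \norm{f} + \norm{f_\otimes}$ into bounds on $\norm{f}$, closing the induction with the recursively defined increasing function $\s_d(t) := L\bigl(t + \max\{\s_1(t),\s_{d-1}(t)\}\bigr)$. Since these estimates hold for every $t>0$, each $g_i$ is smooth.

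The main obstacle is not the existence of a primitive, which falls out cleanly from fiberwise Flaminio--Forni and the inductive hypothesis, but the careful bookkeeping of Sobolev derivative losses at each step, combining Lemmas~\ref{lem:regularity}, \ref{lem:distributionsum}, and~\ref{lem:squaresumphi} with the assembly procedure. The key technical point is that each application of these estimates only costs finitely many derivatives per level of the induction, so that the accumulated loss $\s_d$ remains a finite-valued increasing function of $t$.
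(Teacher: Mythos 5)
Your proposal is correct and follows essentially the same route as the paper: induction on $d$ with the Flaminio--Forni base case, the splitting $f = f_\otimes + f_d$ via~\eqref{eqn:fotimes}, fiberwise solutions justified by Lemma~\ref{lem:kernels}, reassembly by coefficients, and Sobolev bookkeeping through the separable weight bound and Lemma~\ref{lem:regularity}. The only cosmetic difference is that you carry a $\max\{\s_1,\s_{d-1}\}$ in the recursion where the paper simply writes $\s_d(t)=L(\s_{d-1}(t)+t)$; the substance is identical.
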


\begin{proof}
The proof is an induction on $d$, with the base case being~\cite[Theorem~4.1]{FF}.

By Lemma~\ref{lem:kernels} we have that $(f_\otimes\mid_\ell)\in\ker\II_{U_1,\dots,U_{d-1}}(\HH_\otimes)$ for all $\ell \in \Z_d$ and $(f_d \mid_{\bk})\in\ker\II_{U_d}(\HH_d)$ for every $\bk\in\bZ_\times$, so the inductive assumption provides $g_{1,\ell},\dots,g_{d-1,\ell} \in \Cinf(\HH_\otimes)$ and $g_{d,\bk}\in\Cinf(\HH_d)$ satisfying $U_1\,g_{1,\ell}+\dots+U_{d-1}\,g_{d-1,\ell} = (f_\otimes\mid_\ell)$ and $U_d\,g_{d,\bk} = (f_d \mid_{\bk})$ and the bounds $\norm{g_{i,\ell}}_t \ll_{\nu_0,t} \norm{(f_\otimes\mid_\ell)}_{\s_{d-1}}$ and $\norm{g_{d,\bk}}_t \ll_{\nu_0,t} \norm{(f_d\mid_{\bk})}_{\s_1}$. By putting $g_i(\bk,\ell):= g_{i,\ell}(\bk)$ for $i=1,\dots,d-1$ and $g_d(\bk,\ell) := g_{d,\bk}(\ell)$, we define a solution $g_1,\dots,g_d \in \Cinf(\HH_\otimes \otimes\HH_d)$ to the degree-$d$ coboundary equation for $f$. To see the bounds on Sobolev norms, we calculate
\begin{align*}
\norm{g_i}_t^2 &= \sum_{\bk,\ell}(1 + Q_+ (\bk) + Q_d(\ell))^t\abs{g_i(\bk,\ell)}^2 \norm{u_\otimes (\bk)\otimes u_d(\ell)}^2 \\
	&\leq \sum_{\bk,\ell}(1 + Q_+ (\bk))^t(1 + Q_d(\ell))^t\abs{g_i(\bk,\ell)}^2 \norm{u_\otimes (\bk)}^2\norm{u_d(\ell)}^2 \\
	&= \sum_{\ell}(1 + Q_d(\ell))^t\,\norm{g_{i,\ell}}_t^2 \,\norm{u_d(\ell)}^2 \\
	&\ll_{\nu_0,t} \sum_{\ell}(1 + Q_d(\ell))^t\,\norm{(f_\otimes\mid_\ell)}_{\s_{d-1}}^2 \,\norm{u_d(\ell)}^2 \\
	&\ll_{\nu_0,t} \norm{f_\otimes}_{\s_{d-1}+t}^2 \ll_{\nu_0,\e_0, t} \norm{f}_{L(\s_{d-1}+t)},
\end{align*}
by Lemma~\ref{lem:regularity}. A very similar computation holds for Sobolev norms of $g_d$. The theorem is proved by setting $\s_d(t) = L(\s_{d-1}(t) + t)$.
\end{proof}


\section{Lower degrees}
Let $\O_{\RR^d}^{n}(W^s (\HH_1\otimes\dots\otimes\HH_d))$ denote the set of leafwise $n$-forms defined by elements of Sobolev order $s$.

For an $n$-form $\o\in\O_{\RR^d}^n (W^s (\HH_1\otimes\dots\otimes\HH_d))$, we define $\o_\ell$ for $\ell\in\{1,\dots,d\}$ to be the part of $\o$ that forgets $\ell$:
\[
	\o_\ell (U_{i_1},\dots,U_{i_n}) = \o (U_{i_1},\dots,U_{i_n})
\]
where $i_1<\dots<i_n \subset \{1,\dots,\widehat\ell,\dots,d\}$. If we now fix some basis element $u_\ell (k)\in \HH_\ell$, we can define a restriction 
\[
	(\o_\ell\mid_k) \in \O_{\RR^{d-1}}^n(W^s(\HH_1\otimes\dots\otimes\widehat{\HH_\ell}\otimes\dots\otimes\HH_d))
\]
by setting
\[
	(\o_\ell\mid_k)(U_{i_1}, \dots, U_{i_n}) = (\o (U_{i_1}, \dots, U_{i_n}))\mid_k
\]	
for $i_1<\dots<i_n \subset \{1,\dots,\widehat\ell,\dots,d\}$.

\begin{lemma}\label{lem:closed}
Let $\o\in\O_{\RR^d}^n (W^s (\HH_1\otimes\dots\otimes\HH_d))$ with $\di\o=0$. Then for any $\ell = 1,\dots,d$, we have that $\di(\o_\ell\mid_k)=0$ for all $k \in\Z_\ell$.
\end{lemma}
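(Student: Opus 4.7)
The proof amounts to unwinding definitions and exploiting the fact that fixing a coefficient in the $\ell$-th tensor factor commutes with applying any $U_{i_j}$ for $i_j\neq \ell$. First, I would fix indices $i_1<\dots<i_{n+1}$ drawn from $\{1,\dots,\widehat\ell,\dots,d\}$ and write out the defining formula
\[
\di(\o_\ell\mid_k)(U_{i_1},\dots,U_{i_{n+1}}) = \sum_{j=1}^{n+1}(-1)^{j+1}\,U_{i_j}\,(\o_\ell\mid_k)(U_{i_1},\dots,\widehat{U_{i_j}},\dots,U_{i_{n+1}}).
\]
By the definition of $(\o_\ell\mid_k)$, each term equals $(-1)^{j+1}\,U_{i_j}\,\bigl(\o(U_{i_1},\dots,\widehat{U_{i_j}},\dots,U_{i_{n+1}})\mid_k\bigr)$.

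Second, I would observe the key commutativity: since each $i_j\neq\ell$, the operator $U_{i_j}$ acts on the $i_j$-th tensor factor and does nothing to the $\ell$-th one, while the restriction $(\cdot\mid_k)$ only fixes the coefficient in the $\ell$-th factor. Hence $U_{i_j}$ and $(\cdot\mid_k)$ commute, and the term above equals
\[
(-1)^{j+1}\,\bigl(U_{i_j}\,\o(U_{i_1},\dots,\widehat{U_{i_j}},\dots,U_{i_{n+1}})\bigr)\mid_k .
\]

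Third, I would pull the restriction outside the sum to obtain
\[
\di(\o_\ell\mid_k)(U_{i_1},\dots,U_{i_{n+1}}) = \left(\sum_{j=1}^{n+1}(-1)^{j+1}\,U_{i_j}\,\o(U_{i_1},\dots,\widehat{U_{i_j}},\dots,U_{i_{n+1}})\right)\!\!\mid_k = (\di\o(U_{i_1},\dots,U_{i_{n+1}}))\mid_k,
\]
which vanishes by hypothesis $\di\o=0$. Since this holds for every admissible $(n+1)$-tuple of indices, $\di(\o_\ell\mid_k)=0$. There is no real obstacle here beyond carefully tracking that the ``forgotten'' direction $\ell$ is disjoint from the indices appearing in the coboundary formula, which is built into the definition of $\o_\ell$.
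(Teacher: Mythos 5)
Your proof is correct and is precisely the argument one expects here: the restriction $(\cdot\mid_k)$ acts only on the $\ell$-th tensor factor (scaling by the constant $\norm{u_\ell(k)}$ and fixing that coordinate), so it commutes with each $U_{i_j}$ with $i_j\neq\ell$, and the coboundary formula therefore passes through the restriction. The paper itself simply cites its companion paper (\cite[Lemma~14.1]{Ramhcii}) rather than re-deriving this, so your write-out of the commutativity step is a reasonable stand-in; the only place you should be explicit is the commutation $U_{i_j}(f\mid_k)=(U_{i_j}f)\mid_k$, which holds exactly because the scalar $\norm{u_\ell(k)}$ and the coefficient restriction in the $\ell$-th slot are unaffected by an operator acting on a different tensor slot. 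Note also that when $n=d-1$ the conclusion is vacuous, since there are no $(n+1)$-tuples in $\{1,\dots,\widehat\ell,\dots,d\}$; your argument handles that case trivially.
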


\begin{proof}
This is \cite[Lemma~14.1]{Ramhcii}.
\end{proof}

\begin{proposition}\label{prop:hrt}
Let $\o\in\O_{\RR^d}^{d-1} (W^s (\HH_1\otimes\dots\otimes\HH_d))$ be a closed $(d-1)$-form, and $\ell=1,\dots,d$. Then for every $k\in \Z_\ell$, we have that
\[
	(\o_\ell\mid_k)(U_1,\dots,\widehat{U_\ell}, \dots, U_d) \in \ker\II_{U_1,\dots,\widehat{U_\ell},\dots,U_d}^s (\HH_{\otimes,\widehat\ell}).
\]
\end{proposition}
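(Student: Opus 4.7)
The plan is to contract the closedness relation $\di\o=0$ against a well-chosen ``partial distribution'' that reduces the equation to a $U_\ell$-invariance statement in a single factor. Since $\o$ is of degree $d-1$ in $d$ variables, there is only one top-degree component of $\di\o$, so closedness amounts to the single relation
\[
\sum_{j=1}^d (-1)^{j+1}\,U_j\,\o^j = 0, \qquad \o^j := \o(U_1,\dots,\widehat{U_j},\dots,U_d),
\]
inside $W^{s-1}(\HH_1\otimes\dots\otimes\HH_d)$. By the Flaminio--Forni classification, any $U_1,\dots,\widehat{U_\ell},\dots,U_d$-invariant distribution on $\HH_{\otimes,\widehat\ell}$ is a tensor product $\D^{\bullet_{\widehat\ell}} = \bigotimes_{i\neq\ell}\D^{\epsilon_i}$. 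I would promote this to a partial contraction
\[
\Phi := \Bigl(\bigotimes_{i\neq\ell}\D^{\epsilon_i}\Bigr)\otimes\mathrm{id}_{\HH_\ell}\colon\HH_\otimes\longrightarrow\HH_\ell,
\]
that leaves the $\ell$-th factor alone.

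Two elementary observations drive the argument. First, $\Phi$ commutes with $U_\ell$, because $U_\ell$ acts only in the $\ell$-th slot, where $\Phi$ is the identity. Second, $\Phi(U_j v)=0$ for every $j\neq\ell$ and every $v\in\Cinf(\HH_\otimes)$, because $\D^{\epsilon_j}$ is $U_j$-invariant in the $j$-th factor. Applying $\Phi$ to the closedness relation therefore annihilates every summand with $j\neq\ell$, leaving $(-1)^{\ell+1}\,U_\ell\,\Phi(\o^\ell)=0$; that is, $\Phi(\o^\ell)$ is a $U_\ell$-annihilated element of some Sobolev space of $\HH_\ell$. But the Flaminio--Forni classification, applied now in a single factor, says the only $U$-annihilated elements of a nontrivial irreducible unitary representation of $\PSL(2,\RR)$ are in the distributional span of $\D^+$ and $\D^-$, which does not sit in any positive-order Sobolev space. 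Provided $\Phi(\o^\ell)$ lands in $W^t(\HH_\ell)$ for some sufficiently large $t$, this forces $\Phi(\o^\ell)=0$.

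Unraveling the definition of $\Phi$, the coefficient of $u_\ell(k)$ in $\Phi(\o^\ell)$ is precisely $\D^{\bullet_{\widehat\ell}}((\o^\ell\mid_k))$ up to a nonzero factor of $\norm{u_\ell(k)}$, so coefficientwise vanishing of $\Phi(\o^\ell)$ delivers the stated conclusion for every $k\in\Z_\ell$ and every choice of $\bullet_{\widehat\ell}$. The main obstacle I anticipate is the Sobolev bookkeeping at the last step: one must verify that after pairing against $d-1$ distributions, the output $\Phi(\o^\ell)$ retains enough regularity on $\HH_\ell$ to exclude the distributional $U_\ell$-invariants. This should follow from iterated use of~\eqref{eqn:obsone} and~\eqref{eqn:obstwo} together with Lemma~\ref{lem:distributionsum} to control the orders of the $\D^{\epsilon_i}$'s, with the order-$s$ hypothesis calibrated so that the conclusion is the asserted membership in $\ker\II^s_{U_1,\dots,\widehat{U_\ell},\dots,U_d}(\HH_{\otimes,\widehat\ell})$.
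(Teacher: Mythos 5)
Your argument is essentially the paper's (the paper delegates to \cite[Lemma~5.2]{Ramhc}, which runs the same computation): closedness of a $(d-1)$-form in $d$ variables is the single relation $\sum_j(-1)^{j+1}U_j\,\o^j=0$, and contracting against $\bigotimes_{i\neq\ell}\D^{\epsilon_i}\otimes\mathrm{id}_{\HH_\ell}$ kills every $j\neq\ell$ term by $U_j$-invariance of $\D^{\epsilon_j}$ and commutes with $U_\ell$, leaving $U_\ell\,\Phi(\o^\ell)=0$; injectivity of $U_\ell$ on the Sobolev vectors of a nontrivial irreducible then gives $\Phi(\o^\ell)=0$, i.e., $\D^{\bullet_{\widehat\ell}}\bigl((\o_\ell\mid_k)\bigr)=0$ for every $k$, which is the statement. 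Your key observations (that $\Phi$ commutes with $U_\ell$ and annihilates $U_j$-derivatives for $j\neq\ell$) are exactly the engine of the argument.

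Two small caveats worth flagging explicitly. First, the last step genuinely requires $\HH_\ell$ to be a \emph{nontrivial} irreducible: if $\HH_\ell$ is trivial, $U_\ell=0$, the closedness relation places no constraint on $\o^\ell$, and the conclusion can fail. You implicitly invoke this when you say ``nontrivial irreducible unitary representation,'' but the proposition itself does not state it; the hypothesis is inherited from where the proposition is used (Theorem~\ref{thm:loweriurs} assumes no trivial factor). It is worth noting that the mechanism is not really the Flaminio--Forni classification of invariant distributions, but the more elementary fact that a nontrivial irreducible of $\PSL(2,\RR)$ has no $U$-fixed vector (e.g.\ by Howe--Moore), so $U$ is injective on any Sobolev space it maps out of. Second, the Sobolev bookkeeping you flag is the real work in making $\Phi$ land in $W^1(\HH_\ell)$ so that the injectivity applies; that is controlled by Lemma~\ref{lem:distributionsum} together with \eqref{eqn:obsone}--\eqref{eqn:obstwo}, as you suspect.
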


\begin{proof}
\cite[Lemma~5.2]{Ramhc}
\end{proof}

\begin{theorem}[Version of Theorem~\ref{thm:soblower} for i.u.r.s]\label{thm:loweriurs}
Let $\HH=\HH_1\otimes\dots\otimes\HH_d$ be an irreducible representation of $\PSL(2,\RR)^d$ with no trivial factor, and let $1\leq n\leq d-1$. Then any smooth $n$-cocycle $\o\in\O_{\RR^d}^n(\Cinf(\HH))$ is a coboundary $\di\eta=\o$ for some $\eta\in\O_{\RR^d}^{n-1}(\Cinf(\HH))$ and $\norm{\eta}_t \ll_{\nu_0,\e_0, t}\,\norm{\o}_{\vars_d}$ where $\vars_d:=\vars_d(t)$ is some increasing function of $t>0$.
\end{theorem}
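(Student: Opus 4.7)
The plan is to induct on $d$, treating all $1\leq n\leq d-1$ simultaneously. Given a closed $n$-cocycle $\o\in\O_{\RR^d}^n(\Cinf(\HH))$ on $\HH = \HH_\otimes\otimes\HH_d$ with $\HH_\otimes := \HH_1\otimes\cdots\otimes\HH_{d-1}$, I split it as $\o=\o_d+\o'\wedge e_d$, where $\o_d$ is the ``forgets-$d$'' piece from Section~5 and $\o'(U_{i_1},\ldots,U_{i_{n-1}}):=\o(U_{i_1},\ldots,U_{i_{n-1}},U_d)$ for $i_{n-1}<d$. The idea is first to kill the $\o_d$ part with a primitive $\eta^{(1)}$, and then to handle the remainder, which lives entirely in the $U_d$-direction.

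In Step~1, I construct $\eta^{(1)}$. By Lemma~\ref{lem:closed}, each restriction $(\o_d\mid_m)\in\O_{\RR^{d-1}}^n(\Cinf(\HH_\otimes))$ is closed. For $n\leq d-2$, the inductive hypothesis on $d$ provides a primitive $\xi_m$ on $\HH_\otimes$ (noting that $\HH_\otimes$ inherits both Assumption~\ref{ass:ume} and the no-trivial-factor hypothesis). For the boundary case $n=d-1$, Proposition~\ref{prop:hrt} places $(\o_d\mid_m)$ in the kernel of all $U_1,\ldots,U_{d-1}$-invariant distributions on $\HH_\otimes$, so Theorem~\ref{thm:topiurs} applied to the $(d-1)$-fold product $\HH_\otimes$ produces $\xi_m$. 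I assemble them into $\eta^{(1)}$ on $\HH$ by $\eta^{(1)}(U_{i_1},\ldots,U_{i_{n-1}}):=\sum_m\xi_m(U_{i_1},\ldots,U_{i_{n-1}})\otimes u_d(m)$ when $U_d$ does not appear (and $0$ otherwise). A direct calculation then gives $(\o-\di\eta^{(1)})_d=0$.

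In Step~2, set $\psi:=\o-\di\eta^{(1)}$. Since $\psi_d=0$, I may write $\psi=\tilde\psi\wedge e_d$, and the closure $\di\psi=0$ forces $\di_\otimes\tilde\psi=0$ as an $(n-1)$-form on $\HH_\otimes$. For $n\geq 2$, each $(\tilde\psi\mid_m)$ is a closed $(n-1)$-form on $\HH_\otimes$ of degree $\leq d-2$, so the inductive hypothesis yields primitives $\zeta_m$; assemble them into $\eta^{(2)}$ on $\HH$ via $\eta^{(2)}(U_{i_1},\ldots,U_{i_{n-2}},U_d):=\sum_m\zeta_m(U_{i_1},\ldots,U_{i_{n-2}})\otimes u_d(m)$ (and $0$ elsewhere), so $\di\eta^{(2)}=\psi$. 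For $n=1$, $\tilde\psi$ is a function with $U_j\tilde\psi=0$ for each $j\leq d-1$; since $\HH$ has no trivial factor, an irreducible nontrivial representation of $\PSL(2,\RR)$ admits no nonzero smooth $U$-invariant vector, forcing $\tilde\psi=0$ so we may take $\eta^{(2)}=0$. Either way, $\eta:=\eta^{(1)}+\eta^{(2)}$ satisfies $\di\eta=\o$. The base case $d=2$, $n=1$ is the $n=d-1$ subcase of Step~1 (which calls Flaminio--Forni via the $d=1$ case of Theorem~\ref{thm:topiurs}) together with the $n=1$ subcase of Step~2.

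The hard part will be tracking the Sobolev norm blow-up through two assembly stages with correct signs. The bounds are propagated as in the proof of Theorem~\ref{thm:topiurs}, using inequalities~\eqref{eqn:obsone} and~\eqref{eqn:obstwo} at each assembly, and defining $\vars_d(t)$ recursively in terms of $\vars_{d-1}$ and $\s_{d-1}$. The combinatorial verifications $(\o-\di\eta^{(1)})_d=0$ and $\di\eta^{(2)}=\psi$ are straightforward but tedious; analogous bookkeeping appears in the Anosov setting of~\cite{Ramhcii}, and the present situation differs only in that every top-degree invocation is now made through Theorem~\ref{thm:topiurs} rather than its Anosov counterpart.
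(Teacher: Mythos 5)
Your proof is correct and follows essentially the same route as the paper's: restrict $\o$ along one factor, cover the $n=d-1$ case via Proposition~\ref{prop:hrt} and Theorem~\ref{thm:topiurs} and the $n<d-1$ case via the inductive hypothesis, then reduce the remainder (your $\tilde\psi$, which coincides with the paper's $\theta$) to a lower-degree problem on the complementary $(d-1)$ factors. The only cosmetic differences are that you peel off factor $d$ rather than factor $1$, and that you spell out the $n=1$ remainder case explicitly as ``no smooth $U$-invariant vectors in a nontrivial irreducible,'' which is arguably a clearer treatment of the edge case than the paper's formal appeal to a primitive in $\O^{n-2}=\O^{-1}$.
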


\begin{proof}
This proof is a version of the proof of~\cite[Theorem~14.6]{Ramhcii}, which in turn is adapted from an induction in~\cite[p. 25]{KK95}. For us, the base case is~\cite[Theorem~17]{M1}, for $1$-cocycles an $\RR^2$-action. The inductive strategy is to suppose we have the result for $\RR^p$-actions, whenever $2\leq p \leq d-1$. Let $t>0$.

For any $k \in Z_1$, $(\o_1\mid_k)$ is a closed $n$-form over the $\RR^{d-1}\cong\langle U_2,\dots,U_d\rangle$-action on $\HH_{\otimes,\widehat1}$, by Lemma~\ref{lem:closed}. 

If $n<d-1$, then our induction assumption produces an $(n-1)$-primitive $\eta_{1,k}\in\O_{\RR^{d-1}}^n(\Cinf(\HH_{\otimes,\widehat 1}))$ for $(\o_1\mid_k)$ satisfying 
\begin{equation}\label{eq:etavars}
	\norm{\eta_{1,k}}_\t \ll_{\nu_0, \e_0, \t} \norm{(\o_1\mid_k)}_{\vars_{d-1}(\t)}
\end{equation}
for any $\t>0$. On the other hand, if $n=d-1$, then $(\o_1\mid_k)$ is a top-degree form for the $\RR^{d-1}\cong\langle U_2,\dots,U_d\rangle$-action, and our Proposition~\ref{prop:hrt} tells us that $(\o_1\mid_k)(U_2,\dots,U_d)\in\ker_{U_2,\dots,U_d}(\HH_{\otimes,\widehat1})$. Theorem~\ref{thm:topiurs} now tells us that there is an $(n-1)$-primitive for $(\o_1\mid_k)$ which we will again call $\eta_{1,k}$, and that satisfies 
\begin{equation}\label{eq:etasig}
	\norm{\eta_{1,k}}_\t \ll_{\nu_0, \e_0, \t} \norm{(\o_1\mid_k)}_{\s_{d-1}(\t)}
\end{equation}
for any $\t>0$. Now we just define $\eta_1$ by the requirement that $(\eta_1\mid_k) = \eta_{1,k}$ for every $k\in \Z_1$. 

It is left to find a primitive for the components of $\o$ which contain the index $1$. Let
\[
\theta(U_{i_2},\dots,U_{i_n}) = \o(U_1, U_{i_2},\dots,U_{i_n}) - U_1\,\eta_1(U_{i_2},\dots,U_{i_n})
\]
and notice that $\theta \in \O_{\RR^d}^{n-1}(\Cinf(\HH))$ and that 
\[
\Norm{\theta(U_{i_2},\dots,U_{i_n})}_\t \leq \Norm{\o(U_1, U_{i_2},\dots,U_{i_n})}_\t + \Norm{U_1\,\eta_1(U_{i_2},\dots,U_{i_n})}_\t.
\]
By Lemma~\ref{lem:diffnorm}, 
\begin{align*}
\Norm{\theta(U_{i_2},\dots,U_{i_n})}_\t &\leq \Norm{\o(U_1, U_{i_2},\dots,U_{i_n})}_\t + \Norm{\eta_1(U_{i_2},\dots,U_{i_n})}_{\t+1} \\ 
	&\ll_{\nu_0, \t} \norm{\o}_{\vars_{d}(\t+1)}
\end{align*}
and this in turn implies that 
\begin{equation}\label{eq:theta}
	\norm{\theta}_\t \ll_{\nu_0, \t} \norm{\o}_{\vars_d (\t+1)}.
\end{equation}

The next calculation shows that
\[
	(\theta_1\mid_k) = (\theta\mid_k) \in \O_{\RR^{d-1}}^{n-1}(\Cinf(\HH_{\otimes,\widehat1}))
\]
is a closed form for any $k\in \Z_1$. For $1<i_1<\dots<i_n\leq d$, 
\begin{align*}
\di(\theta\mid_k)(U_{i_1},\dots,U_{i_n}) &= \sum_{j=1}^{n}(-1)^{j+1}U_{i_j}\,(\theta\mid_k)(U_{i_1},\dots,\widehat U_{i_j},\dots,U_{i_n})\\
		&=\sum_{j=1}^{n}(-1)^{j+1}U_{i_j}\,\left(\o(U_1,U_{i_1},\dots,\widehat U_{i_j},\dots,U_{i_n})\mid_k\right)\\
		&\quad -\sum_{j=1}^{n}(-1)^{j+1}U_{i_j}\,\left(U_1\,\eta_1(U_{i_1},\dots,\widehat U_{i_j},\dots,U_{i_n})\mid_k\right) \\
		&= U_1\,(\o\mid_k)(U_{i_1},\dots,U_{i_n}) \tag{because $\di\o=0$}\\
		&\quad - U_1\sum_{j=1}^{n}(-1)^{j+1}U_{i_j}\,(\eta_1\mid_k)(U_{i_1},\dots,\widehat U_{i_j},\dots, U_{i_n}) \\
		&=0. \tag{because $\di(\eta_1\mid_k)=(\o_1\mid_k)$}
\end{align*}
Therefore, our induction implies that there exists a primitive $\zeta_k \in \O_{\RR^{d-1}}^{n-2}(\Cinf(\HH_{\otimes,\widehat1}))$ for $(\theta_1\mid_k)$ satisfying 
\begin{equation}\label{eq:zeta}
	\norm{\zeta_k}_\t \ll_{\nu_0,\e_0, \t} \norm{(\theta_1\mid_k)}_{\vars_{d-1}(\t)}
\end{equation}
for every $\t>0$. Finally, we define $\eta$ by
\begin{equation*}
	(\eta\mid_k)(U_{i_1},\dots, U_{i_{n-1}}) = \begin{cases}\zeta_k (U_{i_2},\dots, U_{i_{n-1}}) &\textrm{if } i_1 = 1 \\
																		(\eta_1\mid_k)(U_{i_1},\dots, U_{i_{n-1}}) &\textrm{if } i_1 >1. \end{cases}
\end{equation*}
Then $\di\eta=\o$, so we have found our primitive. Let us check the Sobolev estimates by calculating
\[
	\norm{\eta}_t^2 = \sum_{1\leq i_1< \dots <i_n\leq d} \norm{\eta(U_{i_1},\dots, U_{i_{n-1}})}_t^2.
\]
We consider each term separately. 

On one hand, if $i_1 = 1$, then
\begin{align*}
	\norm{\eta(U_1,U_{i_2},\dots,U_{i_{n-1}})}_t^2 &= \sum_{(k,\bell)\in\Z_1\times\bZ_\times}\left(1+Q_1(k)+Q_+(\bell)\right)^t \\
		&\quad\times\abs{\eta(U_1,U_{i_2},\dots,U_{i_{n-1}})(k,\bell)}^2\,\norm{u(k)\otimes v(\bell)}^2 \\
		&\leq \sum_{k\in\Z_1}\left(1+Q_1(k)\right)^t \sum_{\bell\in\bZ_\times}\left(1+Q_+(\bell)\right)^t \\
		&\quad\times\abs{\eta(U_1,U_{i_2},\dots,U_{i_{n-1}})(k,\bell)}^2\,\norm{u(k)}^2\,\norm{v(\bell)}^2 \\
		&= \sum_{k\in\Z_1}\left(1+Q_1(k)\right)^t\,\norm{(\eta\mid_k)(U_1,U_{i_2},\dots,U_{i_{n-1}})}_t^2 \\
		&= \sum_{k\in\Z_1}\left(1+Q_1(k)\right)^t\,\norm{\zeta_k(U_1,U_{i_2},\dots,U_{i_{n-1}})}_t^2 \\
		&\ll_{\nu_0, \e_0, t} \sum_{k\in\Z_1}\left(1+Q_1(k)\right)^t\,\norm{(\theta\mid_k)}_{\vars_{d-1}(t)}^2 \tag{by~\eqref{eq:zeta}} \\
		&\leq \norm{\theta}_{\vars_{d-1}(t)+t}^2 \\
		&\ll_{\nu_0,\e_0, t} \norm{\o}_{\vars_{d-1}(\vars_{d-1}(t)+t+1)}^2. \tag{by~\eqref{eq:theta}}
\end{align*}
On the other hand, if $i_1>1$, then
\begin{align*}
	\norm{\eta(U_{i_1},\dots,U_{i_{n-1}})}_t^2 &= \sum_{(k,\bell)\in\Z_1\times\bZ_\times}\left(1+Q_1(k)+Q_+(\bell)\right)^t \\
		&\quad\times\abs{\eta(U_{i_1},\dots,U_{i_{n-1}})(k,\bell)}^2\,\norm{u(k)\otimes v(\bell)}^2 \\
		&\leq \sum_{k\in\Z_1}\left(1+Q_1(k)\right)^t \sum_{\bell\in\bZ_\times}\left(1+Q_+(\bell)\right)^t \\
		&\quad\times\abs{\eta_1(U_{i_1},\dots,U_{i_{n-1}})(k,\bell)}^2\,\norm{u(k)}^2\,\norm{v(\bell)}^2 \\
		&= \sum_{k\in\Z_1}\left(1+Q_1(k)\right)^t\,\norm{(\eta_1\mid_k)(U_{i_1},\dots,U_{i_{n-1}})}_t^2 \\
		&\ll_{\nu_0, \e_0,t} \sum_{k\in\Z_1}\left(1+Q_1(k)\right)^t\,\norm{(\o_1\mid_k)}_{\max\{\vars_{d-1}(t), \s_{d-1}(t)\}}^2 \tag{by~\eqref{eq:etavars} and~\eqref{eq:etasig}}\\
		&\leq \norm{\o_1}_{\max\{\vars_{d-1}(t), \s_{d-1}(t)\}+t}^2 \leq \norm{\o}_{\max\{\vars_{d-1}(t), \s_{d-1}(t)\}+t}^2
\end{align*}
These two calculations imply that there is some increasing function $\vars_d(t)$ such that $\norm{\eta}_t\ll_{\nu_0, \e_0,t}\norm{\o}_{\vars_d(t)}$ holds. (For example,
\[
	\vars_d (t) = \max\left\{\begin{matrix}\vars_{d-1}(\vars_{d-1}(t)+t+1), \\ \vars_{d-1}(t)+t \\ \s_{d-1}(t)+t \end{matrix}\right\},
\]
works.)
\end{proof}

\section{Proofs}

The proofs of Theorems~\ref{thm:sobtop} and~\ref{thm:soblower} now follow from Theorems~\ref{thm:topiurs} and~\ref{thm:loweriurs} by arguments identical to the proofs of~\cite[Theorems~10.1 and~10.2]{Ramhcii}. Theorem~\ref{thm:unipotent} follows trivially from Theorems~\ref{thm:sobtop} and~\ref{thm:soblower}. Theorem~\ref{thm:mixed} is proved by mixing the arguments in this note with those in~\cite{Ramhc}.

\begin{proof}[Proof of Theorem~\ref{thm:sobtop}]
Let $\HH$ be a unitary representation of $\SL(2,\RR)^d$ with spectral gap and satisfying Assumption~\ref{ass:ume}. This means we can choose $\nu_0,\e_0$ uniformly over the direct integral decomposition
\begin{equation}\label{eq:decomp}
	\HH = \int_\RR^\oplus \HH_{\bnu_{\l}}\,ds(\l)
\end{equation}
where $ds$-almost every $\HH_\l:=\HH_{\bnu_\l}$ is an irreducible unitary representation. (Sobolev spaces $W^s(\HH)$ also decompose accordingly.) Now $f \in \ker\II_{U_1,\dots,U_d}(\HH)$ decomposes as
\[
	f = \int_\RR^\oplus f_\l\,ds(\l)
\]
where $f_\l \in \ker\II_{U}(\HH_\l)$ for $ds$-almost every $\l$. Therefore, Theorem~\ref{thm:topiurs} guarantees that for $ds$-almost every $\l$ there are $g_{1,\l},\dots,g_{d,\l}\in\Cinf(\HH_\l)$ satisfying
\[
	U_1\,g_{1,\l}+\dots+U_d\,g_{d,\l}=f
\]
and the estimates $\norm{g_{i,\l}}_t\ll_{\nu_0, \e_0,t} \norm{f_\l}_{\s_d(t)}$, where $\nu_{i,\l}\leq\nu_0<1$ for all $\nu_\l\in\CC$ appearing in the decomposition~\eqref{eq:decomp}. Setting
\[
	g_i = \int_\RR^\oplus g_{i,\l}\,ds(\l),
\]
we have a solution to the degree-$d$ coboundary equation for $f$, satisfying the estimate $\norm{g_i}_t \ll_{\nu_0,\e_0,t}\norm{f}_{\s_d(t)}$, proving the theorem.
\end{proof}


\begin{proof}[Proof of Theorem~\ref{thm:soblower}]
Let $\HH, \nu_0,\e_0, t$ be as in the theorem statement, and $1 \leq n \leq d-1$.  Let 
\[
	\o \in \O_{\RR^d}^{n}(\Cinf(\HH))\quad\textrm{with}\quad \di\o = 0.
\]
Again, we have a direct integral decompostion
\[
	\Cinf(\HH) = \int_{\RR}^\oplus \Cinf(\HH_{\bnu_\l})\,ds(\l)
\]
where $ds$-almost every $\HH_{\bnu_\l}:=\HH_\l$ is irreducible and without trivial factors (by assumption), and $\o$ decomposes
\[
	\o(U_{i_1}, \dots, U_{i_n}) = \int_\RR^{\oplus} \o_{\l}(U_{i_1}, \dots, U_{i_n})\,ds(\l)
\]
such that $ds$-almost every $\o_\l$ is a cocycle in $\O_{\RR^d}^{n}(\Cinf(\HH_{\l}))$.

For these $\l$, Theorem~\ref{thm:loweriurs} supplies $\eta_\l:=\eta_{\bnu_\l} \in \O_{\RR^d}^{n-1}(\Cinf(\HH_{\bnu_\l}))$ with $\di\eta_{\l} = \o_{\l}$ and
\[  
	\norm{\eta_\l}_t \ll_{\nu_0,\e_0,t} \norm{\o}_{\varsigma_d(t)}.
\]
Defining
\[
	\eta(U_{i_1}, \dots, U_{i_{n-1}}) := \int_\RR^{\oplus} \eta_{\l}(U_{i_1}, \dots, U_{i_{n-1}})\,ds(\l),
\]
gives a solution to the coboundary equation $\di\eta=\o$ satisfying the bound $\norm{\eta}_{t}\ll_{\nu_0,\e_0,t}\norm{\o}_{\vars_d(t)}$.
\end{proof}


\section{Discussion}\label{sec:discussion}

This is the third paper where this general strategy has been implemented, each time with significant adjustments. However, we can interpret the previous efforts in terms of the procedure used here. The strategies in~\cite{Ramhc}, where we treated Anosov $\RR^d$-actions on
\[
\SL(2,\RR)^d/\G = \SL(2,\RR)\times\dots\times\SL(2,\RR)/\G, 
\]
and~\cite{Ramhcii}, where we considered Weyl chamber flows associated to $d$-fold products
\[
\SO^\circ(N,1)^d = \SO^\circ(N,1)\times\dots\times\SO^\circ(N,1), 
\]
correspond to making particular choices of $\varphi$'s in their respective situations. Therefore, the innovation in this note has been the observation that the arguments from those articles work if we re-define $f_\otimes$ in terms of these $\varphi$'s, as we did here in~\eqref{eqn:fotimes}.

In principle, the recipe should apply more generally for $\RR^d$-actions on irreducible homogeneous spaces of $G_1\times\dots\times G_d$ where $G_i$ are (semisimple) Lie groups with the following ingredients:

\paragraph{A nice description of the invariant distributions in irreducible unitary representations of $G$, with spanning set $\{\D^w\}_{w\in\W}$ indexed by some set $\W$.}
\begin{itemize}
\item In the $\SL(2,\RR)\times\dots\times\SL(2,\RR)$ case, we used work of Mieczkowski on geodesic flows of hyperbolic surfaces~\cite{M2}. He showed that in any irreducible unitary representation of $\PSL(2,\RR)$, the space of $\left(\begin{smallmatrix}1/2 & 0 \\ 0 & -1/2\end{smallmatrix}\right)$-invariant distributions is at most two-dimensional, and explicitly gave a spanning set $\{\D^0, \D^1\}$ in each irreducible. The methods followed those of Flaminio--Forni for horocycle flows~\cite{FF}.
\item For hyperbolic manifolds of arbitrary dimension, we proved~\cite[Theorem~1.2]{Ramhcii}, a result showing that contrary to the surfaces case, the invariant distributions for the geodesic flow element form an \emph{infinite}-dimensional space in any irreducible unitary representation of $\SO^\circ(N,1)$, when $N\geq 3$. Like in Mieczkowski and Flaminio--Forni's work, the proof proceeded by looking at the action of the geodesic flow element on $K$-types in irreducible unitary representations, and relating invariant distributions to a (partial) difference equation with a combinatorial flavor. We found a spanning set $\{\D^{\bm,\l}\}_{(\bm,\l)\in M\times \L}$, indexed by certain Gelfand--Cejtlin arrays. (See~\cite[Figure~1]{Ramhcii}.)
\item In this article, we use the seminal work of Flaminio--Forni~\cite{FF}, where the space of horocycle flow-invariant distributions is studied. As in the case of geodesic flows of surfaces, it turns out that the space of invariant distributions is at most two-dimensional in any irreducible unitary representation of $\PSL(2,\RR)$.
\end{itemize}

\paragraph{A way to choose $\varphi$'s that satisfy~\eqref{eqn:varphiconditions} and some sort of controlling statement like Lemma~\ref{lem:squaresumphi}.} 
\begin{itemize}
\item Mieczkowski gives the geodesic flow-invariant distributions in any irreducible unitary representation of $\PSL(2,\RR)$. This is a space of (at most) two dimensions, and he labels a spanning set by $\{\D^0, \D^1\}$. If we were to follow the strategy used in this paper, the results in~\cite{Ramhc} could be proved by defining elements $\varphi_0$ and $\varphi_1$ satisfying~\eqref{eqn:varphiconditions}. Actually, we can interpret the strategy in that paper as a \emph{family} of choices of $\varphi_0$'s and $\varphi_1$'s, which reflects the fact that primitives to higher-degree coboundary equations are not unique, as they are for the first degree.
\item For geodesic flows of higher-dimensional hyperbolic manifolds, since we have an infinite-dimensional space of invariant distributions in every irreducible unitary representation of $\SO^\circ(N,1)$, we would have $\{\varphi_{\bm,\l}\}$ where $\bm,\l$ correspond to certain Gelfand--Cejtlin arrays. Part of the challenge therefore is in coping with this infinite-dimensionality. Carrying the arguments from the $\SL(2,\RR)$ situation directly over to this case is impossible. Therefore, instead of making the ``family'' of choices that we had made in the previous paper, we simply chose the ``easiest'' $\varphi$'s: Each $\varphi_{\bm,\l}$ was a basis element of the irreducible unitary representation, where the basis $\{u_{\bm,\l}\}$ was exactly what had been used to find our basis $\{\D^{\bm,\l}\}$ of invariant distributions in the first place.
\item In this paper, $\varphi_\pm$ are indexed by $\{+,-\}$, the indices for the spanning set $\{\D^+,\D^-\}$ of invariant distributions found in~\cite{FF}, and are engineered to satisfy~\eqref{eqn:varphiconditions}. 
\end{itemize}

\paragraph{A ``base case'' for the induction telling us that we indeed have a complete set of obstructions to the degree-$1$ coboundary equation and giving us control over the Sobolev norms of primitives.} 
\begin{itemize}
\item For Anosov $\RR^d$-actions on $\SL(2,\RR)^d/\G$ we used Mieczkowski's~\cite[Theorem~4.3]{M2}, a theorem stating that one can solve the coboundary equation for the geodesic flow as long as the given function is in the kernel of all geodesic flow-invariant distributions, and giving estimates on the Sobolev norms of primitives. (Actually, Mieczkowski proved this for $\PSL(2,\RR)$, but the extension to $\SL(2,\RR)$ is not hard.) 
\item For Weyl chamber flows of $\SO^\circ(N,1)^d$, we had to establish a base case~\cite[Theorem~1.1]{Ramhcii} for the geodesic flow of a hyperbolic manifold. The result was a statement analogous to Mieczkowski's. This was the main concern of Part~I of that article.
\item In fact, both of the above mentioned theorems were inspired by the work of Flaminio and Forni, where a similar analysis was carried out for horocycle flows of surfaces, resulting in \cite[Theorem~4.1]{FF}, which has been our ``base case'' here.
\end{itemize}

\paragraph{}
Once these ingredients are present, it should be possible to carry out an induction like the one used here for the top-degree theorem (see the proof of Theorem~\ref{thm:topiurs}). The lower-degree statement follows comparatively easily after the top-degree part is settled, by using the induction used here in the proof of Theorem~\ref{thm:loweriurs}, which is itself adapted from~\cite{KK95}.

\section*{Acknowledgments}
This work was completed while the author was employed at the University of Bristol and supported by ERC. Much of it was written in October--December 2012, while visiting the Department of Mathematics at the Pennsylvania State University and enjoying their hospitality.


\bibliographystyle{amsalpha}
\bibliography{../bibliography}
\end{document}